\documentclass[11pt]{amsart}

\usepackage[article]{robertpreamble}
\usepackage[margin=1in]{geometry}

\title[Volume Growth under Positive Intermediate Curvature]{Volume Growth under Positive Intermediate Curvature and a Ricci Lower Bound}
\author{Robert Koirala}
\address{Department of Mathematics, University of California San Diego}
\email{rkoirala@ucsd.edu}

\begin{document}

\begin{abstract}
    Let $(M^n,g)$ be a complete Riemannian manifold with $\Ric\ge-kg$ and uniformly positive $m$-intermediate curvature in the sense of Brendle--Hirsch--Johne. We prove that the Fisher eigenvalue $\lambda_{n-m+1}$ is small, after heat averaging, below the curvature scale $k^{-1}$. Consequently, balls have polynomial volume growth of order $R^{m-1}$ for $R\le k^{-1/2}$. At larger scales we obtain the corresponding estimate with an exponential factor $\exp(C\sqrt k R)$, and an example shows that this factor is necessary.
\end{abstract}

\maketitle

\section{Introduction}

A guiding principle in Riemannian geometry is that positively curved manifolds are geometrically small and topologically rigid. For example, the Bonnet--Myers theorem says that a complete manifold with a uniformly positive Ricci curvature has bounded diameter and finite fundamental group. Positivity of scalar curvature, however, no longer implies compactness. Nevertheless, several conjectures of Gromov predict that it still forces a codimension-two loss in large-scale geometry, expressed through volume growth, Urysohn width, and macroscopic dimension \cite{Gromov,GromovWidth,GromovProblems}.

Motivated by this circle of questions, we study volume growth under the intermediate-curvature condition introduced by Brendle--Hirsch--Johne \cite{BHJ}, which interpolates between Ricci and scalar curvature. For an orthonormal frame $e_1,\ldots,e_n$ and $1\le m\le n-1$, define the $m$-intermediate curvature by
\begin{equation}\label{eq:intermediate-curvature}
    \cC_m(e_1,\ldots,e_m)
    :=\sum_{p=1}^m\sum_{q=p+1}^n
    \Rm(e_p,e_q,e_p,e_q).
\end{equation}
Our convention is that the round sphere has positive sectional curvature. The endpoint identities
\[
    \cC_1(e_1)=\Ric(e_1,e_1),
    \qquad
    2\cC_{n-1}=\scal
\]
make this interpolation explicit. Geometric and topological aspects of nonnegative or positive intermediate curvature have been developed in \cite{BHJ,Chen,ChenHong,CJW,CJKL,Xu}.

Our main result shows that a uniform positive lower bound for $\cC_m$, together with a Ricci lower bound, forces a loss of $n-m+1$ dimensions in the volume growth below the scale determined by the negative part of the Ricci curvature.

\begin{theorem}\label{thm:volume}
    Let $(M^n,g)$ be complete and connected, and suppose that
    \begin{equation}\label{eq:curvature-assumptions}
        \Ric\ge-kg,
        \qquad
        \cC_m\ge\kappa>0.
    \end{equation}
    There are constants $C=C(n,m)<\infty$ and $C_n=C(n)<\infty$ such that, for every $x\in M$ and every $R>0$ with $kR^2\le1$,
    \begin{equation}\label{eq:volume-local}
        \vol B(x,R)
        \le C R^n(1+\kappa R^2)^{-(n-m+1)/2}.
    \end{equation}
    For arbitrary $R>0$,
    \begin{equation}\label{eq:volume-global}
        \vol B(x,R)
        \le C\kappa^{-(n-m+1)/2}R^{m-1}e^{C_n\sqrt k R}.
    \end{equation}
\end{theorem}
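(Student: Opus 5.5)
The plan is to follow the route suggested by the abstract. We study the heat kernel $H(x,\cdot,t)$ and its Fisher information matrix
\[
F_{ij}=\int_M \nabla_i\log H\,\nabla_j\log H\,H\,d\mu,
\]
with ordered eigenvalues $\lambda_1\le\dots\le\lambda_n$. By Li--Yau (valid under $\Ric\ge -k$), the trace satisfies $\operatorname{tr}F\lesssim n/(2t)+Ck$. The core step is to show that positive $\cC_m$ forces the bottom $n-m+1$ eigenvalues to be small after averaging in time:
\[
\int_{t}^{2t}\lambda_{n-m+1}(s)\,ds\lesssim \frac{1}{\kappa t}
\qquad\text{for } t\le k^{-1}.
\]
To prove this, compute $\partial_t F$ via the Bochner formula applied to $\log H$. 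The derivative produces a nonpositive Hessian term and a curvature term $-2\int \Ric(\nabla\log H,\cdot)H$. Restricting to the span $V$ of the bottom $n-m+1$ eigenvectors and summing, the intermediate curvature enters through $\cC_m$ applied to the orthogonal $m$-frame. One combines $\sum_{p\le m}\Ric(e_p,e_p)$ minus the internal sectional terms with the Hessian term (Gauss-equation style, as in Brendle--Hirsch--Johne stability computations) to get
\[
\frac{d}{dt}\operatorname{tr}(F|_{V^\perp})\le -c\kappa\,(\ldots)+Ck\,\operatorname{tr}F.
\]
Integrating over $[t,2t]$ and using the Li--Yau bound then gives the averaged smallness of $\lambda_{n-m+1}$. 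This step is the main obstacle: making the curvature term produce exactly $\cC_m$ rather than a full Ricci sum requires a careful choice of frame adapted to the eigenspaces of $F$, and a Kato/Hessian absorption. I would first attempt it on the eigenvector frame and handle the error from non-differentiability of the eigenvalues by working with Ky Fan partial traces, which are concave and hence admit one-sided derivatives.

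Next, convert the Fisher bound to volume. The standard entropy argument runs through the Nash entropy $N=-\int H\log H$: its time derivative is $\operatorname{tr}F$. Splitting $\operatorname{tr}F$ as the top $m-1$ eigenvalues, each at most $\sim 1/(2t)$, plus $n-m+1$ eigenvalues of size $\lesssim \min(1/t,\,1/(\kappa t^2))$ gives
\[
N(t)\le \frac n2\log t-\frac{n-m+1}{2}\log(1+\kappa t)+C \qquad (t\le k^{-1}).
\]
The lower Gaussian heat-kernel bound relates $N(R^2)$ to $\log\vol B(x,R)$ for $kR^2\le1$, since Li--Yau Harnack has constants depending only on $n$ at that scale. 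Setting $t=R^2$ yields \eqref{eq:volume-local}.

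For arbitrary $R$, apply \eqref{eq:volume-local} at the scale $r=\min(R,k^{-1/2})$. When $R>r$, Bishop--Gromov relative comparison with the space form of curvature $-k/(n-1)$ gives
\[
\vol B(x,R)\le \vol B(x,r)\cdot\frac{V_{-k}(R)}{V_{-k}(r)}\le \vol B(x,r)\,(R/r)^{n} e^{C_n\sqrt k R}.
\]
Combining this with $\vol B(x,r)\le C r^{n}(\kappa r^2)^{-(n-m+1)/2}=C\kappa^{-(n-m+1)/2}r^{m-1}$ and absorbing the factor $(R/r)^{m-1}\le e^{\sqrt kR}$ into the exponential gives \eqref{eq:volume-global}. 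When $R\le k^{-1/2}$, \eqref{eq:volume-global} already follows from \eqref{eq:volume-local}.
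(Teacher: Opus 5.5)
There is a genuine gap in your core step, and it is not a technicality: the mechanism you describe cannot produce $\cC_m$. Writing $\cC_m(E)=\tr_E\Ric-\sum_{p<q\le m}\Rm(e_p,e_q,e_p,e_q)$, you need a source for the internal sectional curvatures of the $m$-plane. The Bochner formula for $\log H$ has none, in either variable. It yields a Hessian term plus $\Ric(\nabla\log H,\nabla\log H)$, so after integration curvature enters only through Ricci; for the source tensor this is $\langle\Ric,F\rangle=\sum_i\Ric(e_i,e_i)\lambda_i$. The Hessian term contains no curvature. There is also no hypersurface on which a Gauss equation could generate the missing terms, as there is along the stable minimal slicings of Brendle--Hirsch--Johne. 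Since $\cC_m\ge\kappa$ gives no positive lower bound for $\tr_E\Ric$, a Ricci-only computation cannot see the hypothesis. The dimension count is also off: the complement of the bottom $n-m+1$ eigenvectors is $(m-1)$-dimensional, so there is no orthogonal $m$-frame. Moreover, an inequality for $\tr(F|_{V^\perp})$, which involves the top $m-1$ eigenvalues, would not make the bottom block small.

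The missing idea, which is how the paper proceeds, is to pass to $m$-forms. With $\alpha_y=\sqrt{2tK}\,d_xf$ and $\Omega_{\mathbf y}=\alpha_{y_1}\wedge\cdots\wedge\alpha_{y_m}$ one has $\frac1{m!}\int\Omega_{\mathbf y}\otimes\Omega_{\mathbf y}\,d\mathbf y=\bigwedge^m g^F_t$, and one splits $2\cC_m(E_I)=\tr_{E_I}\Ric+\langle\cR_me^I,e^I\rangle$. The $\cR_m$-part is controlled by the integrated Hodge--Weitzenb\"ock inequality $\int\langle\cR_m\eta,\eta\rangle\le C_m\int|\nabla\eta|^2$ for $\eta=\chi\sqrt w\,\Omega_{\mathbf y}$ with $w=K(z,\cdot,s)$. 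This is an integration by parts in the source variable against a heat-kernel weight. Then $|\nabla\Omega_{\mathbf y}|^2$ and the Ricci part are bounded by the Hessian defect $Q_k$, whose space-time heat integral is at most $C_nTL_k(T)^2$. The result is $\kappa P_s\sigma_m(\ceJ_t)\lesssim t^{-1}(P_sQ_k(\cdot,t)+1)$ for $s\simeq t$. This bounds $\sigma_m\ge\lambda_{n-m+1}^m$, not $\lambda_{n-m+1}$ itself, so it gives only the rate $(\kappa t)^{-1/m}$, and only after heat averaging. Your claim amounts to a pointwise rate $(\kappa t)^{-1}$ for the normalized eigenvalue, which is stronger than anything this argument yields. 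It is also not needed, since any decay $(\kappa r)^{-\epsilon}$ keeps $\int^T(\kappa r)^{-\epsilon}\,dr/r$ bounded.

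Your entropy step has a second, fixable problem: it conflates source and target quantities. For $F_{ij}$ to be a matrix at $x$, it must use $\nabla_x\log H(x,y,t)$. But then $\partial_tF$ at fixed $x$ contains $\Delta_xF$, which has no sign; the Bochner structure appears only for $\square=\partial_t-\Delta_x$. Also, $\frac{d}{dt}\bigl(-\int H\log H\bigr)=\int|\nabla_y\log H|^2H$ is the \emph{target} Fisher information, not $\tr F$. What actually holds is $\square\ceN_x(t)=(\tr\ceJ_t-n)/(2t)$. So the entropy bound must come from Duhamel's principle, which involves $P_{T-r}\lambda_{n-m+1}(\cdot,r)(x)$; this is also why a heat-averaged eigenvalue estimate suffices.

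Three smaller points. You need the matrix bound $g^F_t\le L_k(t)g$, from the reverse Poincar\'e inequality, not just a trace bound, for the top $m-1$ eigenvalues to contribute only $\frac{m-1}2\log t$. Converting an entropy upper bound into a volume upper bound uses the Gaussian \emph{upper} bound $K\le C_n\vol B(x,\sqrt t)^{-1}e^{C_nkt}$, not the lower one. The Bishop--Gromov step is fine, except that the leftover factor is $(R/r)^{n-m+1}=(\sqrt kR)^{n-m+1}$ rather than $(R/r)^{m-1}$; it is still absorbed into $Ce^{C_n\sqrt kR}$.
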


Under the stronger assumption of nonnegative sectional curvature, the $R^{n-2}$ volume growth follows from Petrunin's integral bound for scalar curvature \cite{Gromov,Petrunin}. Gromov asked whether nonnegative Ricci curvature and a uniform positive scalar-curvature lower bound suffice for volume growth of order $R^{n-2}$ \cite[p.~114]{Gromov}; see also \cite{Cai}. The three-dimensional case of Gromov's conjecture was proved by Munteanu--Wang \cite{MunteanuWang}; refinements and alternative arguments appear in \cite{ChodoshLiStryker,HuangLiu,Wang,WeiXuZhang,BoZhu}. A centered higher-dimensional estimate was obtained by Wang--Xie--Zhu--Zhu \cite[Theorem~1.9]{WangXieZhuZhu}. Gromov's volume conjecture was recently proved independently by Antonelli \cite{antonelli2026}, Ge \cite{Ge}, and Kong--Zhu \cite{KongZhu}. Antonelli also proved the result under positive intermediate curvature when $k=0$.

A related codimension-two conjecture of Gromov asks whether a uniform positive scalar-curvature lower bound controls the $(n-2)$-dimensional Urysohn width \cite{GromovWidth,GromovProblems,GuthUryson}. In dimension three, stronger waist and width estimates were proved by Liokumovich--Maximo \cite{LiokumovichMaximo} and Liokumovich--Wang \cite{LiokumovichWang}; the higher-dimensional conjecture remains open. Gromov also conjectured that the universal cover of a closed $n$-manifold with positive scalar curvature has macroscopic dimension at most $n-2$ \cite{GromovMacroscopic}; see \cite{BolotovDranishnikov,DranishnikovDuality} for partial results. Connections between ball-volume estimates, codimension-two Urysohn width, and macroscopic scalar curvature are further developed in \cite{AlpertBalitskiyGuth,GuthLarge,Sabourau}. Kumar--Sen showed that the macroscopic version of Gromov's Urysohn-width conjecture is false in dimensions at least four \cite{KumarSen}. Related volume and dimension-drop results under bi-Ricci or other partial curvature conditions appear in \cite{AntonelliXu,CucinottaMondino,ZhouZhu,XingyuZhu}.

\begin{remark}\label{rem:sharpness}
    If the lower bound for $\cC_m$ is omitted, then $\bH^n(-k/(n-1))$ satisfies $\Ric=-kg$ and has exponential volume growth. Thus a Ricci lower bound alone cannot force higher codimensional polynomial volume growth.
    
    Conversely, positive intermediate curvature alone does not give a scale-free polynomial bound. Suppose $m\ge3$ and consider
    \[
        M=\bS^{n-m+1}(a)\times\bH^{m-1}(-B),
    \]
    where the two factors have constant sectional curvatures $a^{-2}$ and $-B$, respectively. If $E$ is an $m$-plane, $F=E^\perp$, and $f_1,\ldots,f_{n-m}$ is an orthonormal basis of $F$, then
    \[
        \cC_m(E)=\frac{\scal}{2}
        -\sum_{\alpha<\beta}\Rm(f_\alpha,f_\beta,f_\alpha,f_\beta).
    \]
    Since every sectional curvature of the product is at most $a^{-2}$,
    \begin{align*}
        \cC_m(E) \ge \binom{n-m+1}{2}a^{-2}-\binom{m-1}{2}B
        -\binom{n-m}{2}a^{-2}=(n-m)a^{-2}-\binom{m-1}{2}B.
    \end{align*}
    For every $B>0$, choosing $a$ sufficiently small gives $\cC_m\ge\kappa$, while the volume growth contains the factor \(\exp\bigl((m-2)\sqrt B\,R\bigr).\) This proves that no polynomial bound can follow from $\cC_m\ge\kappa$ without a uniform Ricci lower bound. Finally, taking $B=k/(m-2)$ gives $\Ric\ge-kg$ and an exponential factor $\exp(\sqrt{m-2}\,\sqrt k R)$. Hence the dependence on the curvature scale in \eqref{eq:volume-local} and \eqref{eq:volume-global} is necessary.
\end{remark}

\subsection{Idea of the proof}

The first step is to use the Li--Yau heat-kernel estimate \cite[Corollary~3.1]{LiYau} to control the volume ratio at scale $R$ by the pointed Nash entropy $\ceN_x(R^2)$ (see \eqref{eq:Nash-definition}):
\begin{equation}\label{eq:intro-entropy-volume}
    \log\frac{\vol B(x,R)}{R^n}
    \le \ceN_x(R^2)+C_n(1+kR^2).
\end{equation}
To estimate the Nash entropy, we use its tensorial refinement, the Fisher metric $g_t^F$ (see \eqref{eq:Fisher-metric}). If $\lambda_1\le\cdots\le\lambda_n$ are the eigenvalues of $g^{-1}g_t^F$, then
\begin{align}\label{eq:intro-box-nash}
    -\square\ceN_x(t)
    =\frac{1}{2t}\sum_{i=1}^n(1-\lambda_i(x,t)),
\end{align}
where $\square=\partial_t-\Delta_x$, and $P_t$ denotes the heat semigroup (see \eqref{eq:E-Fisher-trace} as well as \cite{chowkoirala2026fisher}). The main analytic estimate in Theorem~\ref{thm:Fisher-decay} is that whenever $kt\le1$,
\begin{equation}\label{eq:intro-Fisher-decay}
    P_t\lambda_{n-m+1}(\cdot,t)(x)
    \le C\min\{1,(\kappa t)^{-1/m}\}.
\end{equation}
This smallness implies that the heat flow sees at most $m-1$ macroscopic directions and produces a loss of $n-m+1$ dimensions in the volume growth. More precisely, integrating \eqref{eq:intro-box-nash} in time and using \eqref{eq:intro-Fisher-decay}, we obtain
\begin{equation}\label{eq:intro-nash-bound}
    \ceN_x(t)\le-\frac{n-m+1}{2}\log(1+\kappa t)+C
    \qquad (kt\le1),
\end{equation}
which combines with \eqref{eq:intro-entropy-volume} to give \eqref{eq:volume-local}.

To prove \eqref{eq:intro-Fisher-decay}, we pass to the $m$th exterior power of the Fisher metric, whose trace dominates $\lambda_{n-m+1}^m$ (see \eqref{eq:sigma-controls-lambda}). Positive $m$-intermediate curvature enters through the weighted Hodge--Weitzenb\"ock formula and converts this exterior-power quantity into curvature terms controlled by the nonnegative term $Q_k$ associated to a curvature-corrected version of $-t^2\square\ceN_x(t)$ (see \eqref{eq:Qk-definition}). Proposition~\ref{prop:corrected-source} controls the heat space-time integral of $Q_k$; averaging over times comparable to $t$ and taking the $m$th root gives \eqref{eq:intro-Fisher-decay}.

Section~\ref{sec:heat-quantities} records the heat-semigroup estimates and studies the Fisher metric and Nash entropy. Section~\ref{sec:exterior-power} proves \eqref{eq:intro-Fisher-decay}. Finally, Section~\ref{sec:entropy-volume} establishes \eqref{eq:intro-nash-bound} and \eqref{eq:intro-entropy-volume}, and completes the proof of Theorem~\ref{thm:volume}.

\subsection*{Acknowledgment}

The author thanks Bennett Chow for constant support and encouragement. Part of this work was carried out while the author was visiting MIT. The author thanks MIT for its hospitality.

\subsection*{LLM usage disclosure}

The author acknowledges the use of AI as an interactive writing and checking aid during the preparation of this manuscript. The author directed this use, selected and substantially revised any generated text, and independently verified the mathematical statements and proofs. The author takes full responsibility for the final content of the paper.

\section{Heat-kernel preliminaries}\label{sec:heat-quantities}

We first introduce the heat-kernel quantities used throughout the proof and then collect the semigroup estimates that control them. Throughout this section, $\square=\partial_t-\Delta_x$ acts in the source space-time variables. Unless a target subscript is displayed, all tensors and derivatives are taken at the source point.

\subsection{Heat semigroup estimates}

Let $K(x,y,t)$ be the minimal heat kernel of $(M,g)$. For a fixed source point $x\in M$ and time $t>0$, write
\begin{equation}\label{eq:heat-measure-potential}
    d\nu_{x,t}(y)
    :=K(x,y,t)\,dg(y)
    =(4\pi t)^{-n/2}e^{-f_{x,t}(y)}\,dg(y).
\end{equation}
When the context is clear, we write $f=f_{x,t}$ and $d\nu=d\nu_{x,t}$. Let $P_t$ denote the heat semigroup, i.e.,
\[
    P_tu(x)=\int_MK(x,y,t)u(y)\,dg(y).
\]
The Ricci lower bound and Bishop--Gromov comparison imply $P_t1=1$; for the relevant volume-growth criterion, see \cite[Theorem~11.8]{Grigoryan}.

For $k\ge0$, set
\begin{equation}\label{eq:Lk}
    L_k(t)
    :=
    \begin{cases}
        \displaystyle\frac{2kt}{1-e^{-2kt}},&k>0,\\[1.2ex]
        1,&k=0.
    \end{cases}
\end{equation}
The function $L_k$ is increasing, $L_k(t)=1+O(kt)$ as $kt\downarrow0$, and $L_k(t)\sim2kt$ as $kt\to\infty$.

We collect the semigroup inequalities used below. The first three are the gradient, reverse Poincar\'e, and reverse logarithmic Sobolev inequalities associated with the curvature-dimension condition $\Ric\ge-kg$.

\begin{proposition}\label{prop:semigroup-estimates}
    For every bounded smooth function $u$ and every $t>0$,
    \begin{align}
        |\nabla P_tu|^2
        &\le e^{2kt}P_t|\nabla u|^2,
        \label{eq:BE-gradient}\\
        P_t(u^2)-(P_tu)^2
        &\ge \frac{2t}{L_k(t)}|\nabla P_tu|^2.
        \label{eq:reverse-Poincare}
    \end{align}
    If $u>0$, then
    \begin{equation}\label{eq:reverse-log-Sobolev}
        P_t(u\log u)-P_tu\log P_tu
        \ge \frac{t}{L_k(t)}
        \frac{|\nabla P_tu|^2}{P_tu}.
    \end{equation}
    Moreover, there is a dimensional constant $C_n$ such that, if $u\ge0$ is bounded and $r\le s\le2r$, then
    \begin{equation}\label{eq:same-point-Harnack}
        P_ru(x)\le C_ne^{C_nkr}P_su(x).
    \end{equation}
\end{proposition}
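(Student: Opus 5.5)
The plan is the standard Bakry--\'Emery $\Gamma_2$ interpolation along the semigroup, with the Harnack inequality handled separately through Li--Yau. Since $\Ric\ge-kg$, Bochner's formula gives
\[
\Gamma_2(v)\ge -k|\nabla v|^2 ,
\]
which is the curvature-dimension condition $CD(-k,\infty)$. Throughout, $u$ is bounded, and $P_t1=1$ holds by stochastic completeness. Justifying the differentiation under the integral on a noncompact manifold is routine: bounded smooth $u$ can be approximated by compactly supported functions, and the heat kernel has Gaussian bounds. I will take this step for granted.

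For \eqref{eq:BE-gradient}, fix $t$ and set $\phi(s)=e^{-2ks}P_s|\nabla P_{t-s}u|^2$ for $0\le s\le t$. Differentiating gives
\[
\phi'(s)=e^{-2ks}P_s\bigl(2\Gamma_2(P_{t-s}u)-2k|\nabla P_{t-s}u|^2\bigr)\ge -4k\phi(s)\,?
\]
This sign needs care. The convention that works is $\phi(s)=e^{2ks}P_s|\nabla P_{t-s}u|^2$: then $\phi'=e^{2ks}P_s(2\Gamma_2+2k|\nabla\cdot|^2)\ge0$. Comparing $\phi(0)\le\phi(t)$ yields $|\nabla P_tu|^2\le e^{2kt}P_t|\nabla u|^2$. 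The same monotonicity holds with $t$ replaced by any $t$, and reading it in the form $|\nabla P_{t-s}u|^2\circ P_s\le e^{2k(t-s)}\cdots$ gives the commutation inequality $P_s|\nabla P_{t-s}u|^2\ge e^{-2ks}|\nabla P_tu|^2$, obtained by applying the first estimate to $P_{t-s}u$ at time $s$.

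For \eqref{eq:reverse-Poincare}, write
\[
P_t(u^2)-(P_tu)^2=\int_0^t \frac{d}{ds}P_s\bigl((P_{t-s}u)^2\bigr)\,ds=\int_0^t 2P_s|\nabla P_{t-s}u|^2\,ds .
\]
By the commutation inequality the integrand is at least $2e^{-2ks}|\nabla P_tu|^2$. Integrating gives
\[
\frac{1-e^{-2kt}}{k}\,|\nabla P_tu|^2=\frac{2t}{L_k(t)}|\nabla P_tu|^2 .
\]
For \eqref{eq:reverse-log-Sobolev} I would do the same with $\Psi(s)=P_s(P_{t-s}u\log P_{t-s}u)$, whose derivative is $P_s\bigl(|\nabla P_{t-s}u|^2/P_{t-s}u\bigr)$. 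One then needs the entropic commutation
\[
P_s\frac{|\nabla v|^2}{v}\ge e^{-2ks}\frac{|\nabla P_sv|^2}{P_sv}.
\]
This follows from the $L^1$-gradient bound $|\nabla P_sv|\le e^{ks}P_s|\nabla v|$ together with Cauchy--Schwarz, $(P_s|\nabla v|)^2\le P_s(|\nabla v|^2/v)\,P_sv$. The $L^1$ bound is the step needing the stronger $\Gamma_2$ inequality
\[
\Gamma_2(v)+k|\nabla v|^2\ge|\nabla|\nabla v||^2 ,
\]
which holds by the refined Kato inequality $|\nabla^2v|\ge|\nabla|\nabla v||$. With it, $e^{ks}P_s|\nabla P_{t-s}v|$ is monotone, and this is the main technical point of the first three inequalities. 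Integrating then produces $\tfrac12\cdot\tfrac{1-e^{-2kt}}{k}=t/L_k(t)$, as required.

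For \eqref{eq:same-point-Harnack}, since $u\ge0$ it suffices to compare kernels pointwise: $K(x,y,r)\le C_ne^{C_nkr}K(x,y,s)$ for $r\le s\le2r$. I would use the Li--Yau parabolic Harnack inequality under $\Ric\ge-kg$ \cite{LiYau}, in the version with constant $\alpha>1$, applied in the variable $y$ at fixed $x$ between times $r$ and $s$ at the same point $y$. It gives
\[
K(x,y,r)\le K(x,y,s)\Bigl(\frac{s}{r}\Bigr)^{n\alpha/2}\exp\bigl(C_n k(s-r)\bigr),
\]
and the distance term vanishes because the spatial point is the same. Since $s/r\le2$ and $s-r\le r$, this gives the bound with $C_n$ depending only on $n$, after fixing $\alpha=2$. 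Integrating against $u\,dg$ concludes. The only obstacle here is tracking the exact form of the $k$-correction in Li--Yau, but any version is linear in $k(s-r)$ plus lower-order terms bounded by $C_n(1+kr)$. Its exponential is absorbed into $C_ne^{C_nkr}$.
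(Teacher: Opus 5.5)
Your proposal is correct and takes essentially the paper's route. The paper cites Bakry--Gentil--Ledoux for the $\Gamma_2$ semigroup interpolation behind the first three inequalities and the Li--Yau parabolic Harnack inequality for the last, and you reconstruct exactly those arguments with the right constants, $(1-e^{-2kt})/k=2t/L_k(t)$ and $(1-e^{-2kt})/(2k)=t/L_k(t)$. One cosmetic point: the inequality $|\nabla^2v|\ge|\nabla|\nabla v||$ you use for the $L^1$ gradient bound is the ordinary Kato inequality, not the refined one. There you should also regularize $|\nabla P_{t-s}v|$ as $(|\nabla P_{t-s}v|^2+\varepsilon)^{1/2}$ near its zeros.
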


\begin{proof}
    The gradient estimate \eqref{eq:BE-gradient} is \cite[Corollary~3.3.19]{BGL}. The reverse inequalities \eqref{eq:reverse-Poincare} and \eqref{eq:reverse-log-Sobolev} are \cite[Theorems~4.7.2(iv) and 5.5.2(v)]{BGL}. For the logarithmic Sobolev form of the Li--Yau inequality, see also \cite{BakryLedoux}. Estimate \eqref{eq:same-point-Harnack} is a direct consequence of the Li--Yau differential Harnack inequality \cite[Theorem~2.2(i)]{LiYau}; see also \cite{LiXu}.
\end{proof}

We shall also use the following heat-flow comparison. It is the Duhamel inequality for the minimal heat kernel and follows by compact exhaustion and the parabolic comparison principle; see \cite[Theorem~7.13 and \S9.2]{Grigoryan}. 

\begin{lemma}\label{lem:heat-potential}
    Let $u$ and $F$ be smooth on $M\times(0,T]$, with $F\ge0$. Suppose that $u$ is bounded below, $\square u\ge F$, and $u(\cdot,t)\to0$ locally uniformly as $t\downarrow0$. Then
    \begin{equation}\label{eq:heat-potential-comparison}
        u(x,T)\ge\int_0^TP_{T-r}F(\cdot,r)(x)\,dr.
    \end{equation}
\end{lemma}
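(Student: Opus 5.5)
The plan is the standard argument: a parabolic comparison principle on a compact exhaustion, followed by passage to the limit. Fix $\varepsilon\in(0,T)$ and an exhaustion $\Omega_1\Subset\Omega_2\Subset\cdots$ of $M$ by precompact smooth domains. Let $K_j$ be the Dirichlet heat kernel of $\Omega_j$, and define
\[
    w_j(x,t):=\int_\varepsilon^t\int_{\Omega_j}K_j(x,y,t-r)F(y,r)\,dg(y)\,dr,
    \qquad t\in[\varepsilon,T].
\]
This function solves $\square w_j=F$ in $\Omega_j\times(\varepsilon,T]$, vanishes on $\partial\Omega_j$, and satisfies $w_j(\cdot,\varepsilon)=0$. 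It is also nonnegative, because $F\ge0$ and $K_j\ge0$. If $F$ is not smooth enough up to the boundary for the classical Duhamel formula, I will first replace $F$ by $F\chi$ with cutoffs $\chi$ compactly supported in $\Omega_j$ and remove the cutoffs at the end using monotone convergence.

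Next I compare $u$ with $w_j$ on $\Omega_j$. Write $u\ge -A$. The difficulty is that $u$ is bounded below but has no sign on $\partial\Omega_j$ or at $t=\varepsilon$. To handle the initial slice, I use the hypothesis that $u(\cdot,\varepsilon)\to0$ uniformly on $\overline{\Omega_j}$ as $\varepsilon\downarrow0$, so $u(\cdot,\varepsilon)\ge-\delta_j(\varepsilon)$ on $\overline\Omega_j$ with $\delta_j(\varepsilon)\to0$. The lateral boundary is the real obstacle. The cleanest fix is to compare with $w_j-h_j$, where $h_j$ is the solution in $\Omega_j\times(\varepsilon,T]$ with $\square h_j=0$, boundary values $A$ on $\partial\Omega_j$, and initial value $\delta_j(\varepsilon)$. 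Then $v:=u-w_j+h_j$ satisfies $\square v\ge0$, and $v\ge0$ on the parabolic boundary. The weak maximum principle on the compact cylinder therefore gives $u\ge w_j-h_j$ in $\Omega_j\times[\varepsilon,T]$.

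The main step is to show that the boundary correction disappears, that is, $h_j(x,T)\to0$ as $j\to\infty$ and then $\varepsilon\to0$. For the boundary data, $A-h_j^{A}$ with zero initial data $0$ is $A\,P^{\Omega_j}_{T-\varepsilon}1$, so $h^{A}_j(x,T)=A\bigl(1-P^{\Omega_j}_{T-\varepsilon}1(x)\bigr)$. As $j\to\infty$ this tends to $A\bigl(1-P_{T-\varepsilon}1(x)\bigr)$, which is $0$ by stochastic completeness $P_t1=1$; the paper already records this from the Ricci lower bound via \cite[Theorem~11.8]{Grigoryan}. The initial-data part is at most $\delta_j(\varepsilon)$. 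To control it, I will take the order of limits carefully: first fix $j_0$ and note that on $\Omega_{j_0}$ the initial slice is $\ge-\delta_{j_0}(\varepsilon)$. Alternatively, and more simply, I can use the fact that $u(\cdot,\varepsilon)\ge -A$ everywhere while $u(\cdot,\varepsilon)\ge-\delta$ on $\Omega_{j_0}$. The initial contribution at $(x,T)$ is then bounded by $\delta+A\,P^{\Omega_j}_{T-\varepsilon}\mathbf 1_{M\setminus\Omega_{j_0}}(x)$. Letting $j\to\infty$, then $j_0\to\infty$ (using $P_{T-\varepsilon}\mathbf 1_{M\setminus\Omega_{j_0}}(x)\to0$, which holds by dominated convergence), and then $\delta\to0$ makes this error vanish.

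Finally, monotone convergence $K_j\uparrow K$ gives $w_j(x,T)\uparrow\int_\varepsilon^TP_{T-r}F(\cdot,r)(x)\,dr$. Letting $\varepsilon\downarrow0$ and applying monotone convergence once more, using $F\ge0$, yields \eqref{eq:heat-potential-comparison}. The only delicate points are the lateral boundary term, which is exactly where stochastic completeness enters, and the order of the limits in $j$, $j_0$, $\varepsilon$.
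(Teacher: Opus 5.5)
Your argument follows the route the paper has in mind. The paper gives no proof beyond ``compact exhaustion and the parabolic comparison principle'' with a reference to Grigor'yan. Your Dirichlet--Duhamel comparison supplies exactly those details, with the lateral defect $A(1-P^{\Omega_j}_{T-\varepsilon}1)$ removed by $P_t1=1$. You are also right that stochastic completeness is essential. On a stochastically incomplete manifold, $u=-(1-P_t1)$ with $F=0$ satisfies every hypothesis and violates the conclusion.

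One step fails as written: the order of the limits. Your $\delta$ is really $\delta_{j_0}(\varepsilon)=\sup_{\overline{\Omega_{j_0}}}u(\cdot,\varepsilon)^-$. For fixed $\varepsilon$ it is nondecreasing in $j_0$, and its limit $\sup_M u(\cdot,\varepsilon)^-$ is not controlled a priori, because the hypothesis gives only locally uniform convergence. So once you send $j_0\to\infty$ at fixed $\varepsilon$, you can no longer send $\delta\to0$. After $j\to\infty$ you have
\[
    u(x,T)\ge\int_\varepsilon^TP_{T-r}F(\cdot,r)(x)\,dr-\delta_{j_0}(\varepsilon)-A\,P_{T-\varepsilon}\mathbf 1_{M\setminus\Omega_{j_0}}(x).
\]
The correct order is to let $\varepsilon\downarrow0$ with $j_0$ fixed, so that $\delta_{j_0}(\varepsilon)\to0$, and only then let $j_0\to\infty$.

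This order needs control of the tail term as $\varepsilon\downarrow0$, which your proposal does not provide. It is easy to supply. By $P_t1=1$,
\[
    P_{T-\varepsilon}\mathbf 1_{M\setminus\Omega_{j_0}}(x)=1-\int_{\Omega_{j_0}}K(x,y,T-\varepsilon)\,dg(y).
\]
Continuity of $K(x,\cdot,\cdot)$ on the compact set $\overline{\Omega_{j_0}}\times[T/2,T]$ then shows that this converges to $P_T\mathbf 1_{M\setminus\Omega_{j_0}}(x)$, which tends to $0$ as $j_0\to\infty$. With this reordering, together with your monotone convergence for the $F$-term, the proof is complete.
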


\subsection{Fisher metric}

We define the symmetric nonnegative definite \(2\)-tensor at the base point \(x\) as
\begin{equation}\label{eq:Fisher-metric}
    (g^F_t)_x
    :=2t\int_M d_x f\otimes d_x f\,d\nu_{x,t}=2t
    \int_M
    d_x\log K(x,y,t)\otimes d_x\log K(x,y,t)
    \,d\nu_{x,t}(y).
\end{equation}
Following the information-geometry literature, we call it the \textit{Fisher metric}; see Fisher \cite{Fisher1925}, Rao \cite{MR15748}, and Amari--Nagaoka \cite{MR1800071}. The analogous construction for the conjugate heat kernel of a Ricci flow is studied in \cite{chowkoirala2026fisher}; see also \cite{ISS} for the Fisher information metric of a fixed-manifold heat kernel. Related heat-kernel metric and embedding constructions appear in \cite{BerardBessonGallot,GigliMantegazza,Portegies}. Let
\[
    \ceJ_t:=g^{-1}g^F_t
\]
be the Fisher endomorphism and order its eigenvalues increasingly:
\begin{equation}\label{eq:Fisher-eigenvalues}
    0\le\lambda_1(x,t)\le\cdots\le\lambda_n(x,t).
\end{equation}
On Euclidean space one has $g^F_t=g$ for every $t>0$. On $\bS^2(a)\times\R^{n-2}$, separation of the heat kernel shows that, for each fixed $t>0$, the two eigenvalues associated with the sphere factor tend to zero as $a\downarrow0$, while the remaining $n-2$ eigenvalues equal one. Thus a small Fisher eigenvalue records a direction that has been averaged out by the heat flow at scale $\sqrt t$. Conversely, Fisher eigenvalues close to one are related to almost-Euclidean splitting; see \cite{chowkoirala2026fisher}.

The next proposition gives the two estimates for the Fisher metric that will be used throughout the paper. Analogous estimates and monotonicity for the conjugate heat kernel of a Ricci flow appear in \cite{chowkoirala2026fisher}.

\begin{proposition}\label{prop:Fisher-estimates}
    If $0<r<t$, then
    \begin{equation}\label{eq:data-processing}
        \frac1t g^F_t\le\frac1r g^F_r.
    \end{equation}
    Moreover, for every $t>0$,
    \begin{equation}\label{eq:Fisher-upper}
        0\le g^F_t\le L_k(t)g.
    \end{equation}
\end{proposition}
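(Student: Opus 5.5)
Both estimates reduce to statements about scalar quadratic forms. For $v\in T_xM$,
\[
    g^F_t(v,v)=2t\int_M (d_xf(v))^2\,d\nu_{x,t}=2t\int_M \frac{(v\cdot\nabla_x K(x,y,t))^2}{K(x,y,t)}\,dg(y),
\]
which is $2t$ times the Fisher information of the family $x\mapsto K(x,\cdot,t)$ in direction $v$. So it suffices to prove, for each fixed unit vector $v$, the scalar inequalities $\frac1t g^F_t(v,v)\le \frac1r g^F_r(v,v)$ and $g^F_t(v,v)\le L_k(t)|v|^2$. Nonnegativity of $g^F_t$ is immediate from the definition.

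\emph{Step 1: a variational formula.} The plan is to write the Fisher information as a supremum over test functions. By Cauchy--Schwarz against $d\nu_{x,t}$, and using $\int K\,dg=P_t1=1$ (so $\int v\cdot\nabla_x K\,dg=0$, which lets us subtract constants),
\[
    \int\frac{(v\cdot\nabla_xK)^2}{K}\,dg=\sup_{\phi}\frac{\bigl(v\cdot\nabla P_t\phi(x)\bigr)^2}{P_t(\phi^2)(x)-(P_t\phi(x))^2},
\]
with the supremum over bounded smooth $\phi$. Equality is approached by truncations of $\phi=v\cdot\nabla_x\log K$. Differentiating under the integral and the truncation are justified by the Li--Yau gradient bounds on $K$ together with Gaussian bounds.

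\emph{Step 2: the upper bound \eqref{eq:Fisher-upper}.} Insert the reverse Poincar\'e inequality \eqref{eq:reverse-Poincare} into the variational formula:
\[
    |\nabla P_t\phi|^2\le \frac{L_k(t)}{2t}\bigl(P_t\phi^2-(P_t\phi)^2\bigr).
\]
Each quotient in the supremum is therefore at most $L_k(t)/(2t)$. Multiplying by $2t$ gives $g^F_t(v,v)\le L_k(t)$.

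\emph{Step 3: data processing \eqref{eq:data-processing}.} Use the semigroup property $K(x,\cdot,t)=P_{t-r}$ applied to $K(x,\cdot,r)$, i.e.\ $K(x,y,t)=\int K(x,z,r)K(z,y,t-r)\,dg(z)$. This is a Markov kernel, independent of $x$, applied to the family $K(x,\cdot,r)$. The data-processing inequality for Fisher information then gives the pointwise monotonicity of the information itself. Concretely, apply Cauchy--Schwarz in $z$ with weight $K(x,z,r)K(z,y,t-r)$:
\[
    \bigl(v\cdot\nabla_xK(x,y,t)\bigr)^2\le K(x,y,t)\int\frac{(v\cdot\nabla_xK(x,z,r))^2}{K(x,z,r)}K(z,y,t-r)\,dz .
\]
Divide by $K(x,y,t)$, integrate in $y$, and use $\int K(z,y,t-r)\,dy=1$. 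This yields
\[
    \frac{1}{2t}g^F_t(v,v)\le\frac1{2r}g^F_r(v,v),
\]
which is \eqref{eq:data-processing}.

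\emph{Main obstacle.} The only delicate point is analytic justification on a noncompact manifold. We need to differentiate $K$ under the integral sign, exchange integrals, and know that $\nabla_x\log K$ is square-integrable against $\nu$ (so the tensor is finite) and that the truncated $\phi$ approximate. I would handle this with Li--Yau Gaussian upper and lower bounds and the gradient estimate $|\nabla\log K|^2\lesssim t^{-1}(1+d^2/t)+k$ under $\Ric\ge-kg$, plus stochastic completeness $P_t1=1$. If needed, I would first work on compact exhaustions and pass to the limit by monotone convergence of minimal heat kernels.
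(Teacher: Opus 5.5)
Your proposal is correct and follows essentially the same route as the paper. The upper bound is the reverse Poincar\'e inequality \eqref{eq:reverse-Poincare} applied through the duality $\nabla_vP_t\phi(x)=\int_M\phi\,\nabla^x_v\log K\,d\nu_{x,t}$, and \eqref{eq:data-processing} comes from differentiating Chapman--Kolmogorov, applying Cauchy--Schwarz/Jensen against the weight $K(x,z,r)K(z,y,t-r)$, and integrating with $P_{t-r}1=1$. Your truncation form of the variational formula replaces the paper's Riesz-representation step, with the small advantage that it yields finiteness of the Fisher information without assuming in advance that the score lies in $L^2(d\nu_{x,t})$.
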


\begin{proof}
    Fix $x\in M$ and a unit vector $V\in T_xM$, and set
    \[
        S_V(y,t):=\nabla_V^x\log K(x,y,t)=-\nabla_V^xf_{x,t}(y).
    \]
    Thus
    \begin{equation}\label{eq:Fisher-score}
        \frac1{2t}g^F_t(V,V)
        =\int_M S_V(y,t)^2\,d\nu_{x,t}(y).
    \end{equation}
    We first prove \eqref{eq:Fisher-upper}. Differentiating the heat semigroup at the source point gives
    \begin{equation}\label{eq:score-duality}
        \nabla_VP_tu(x)=\int_MuS_V\,d\nu_{x,t}.
    \end{equation}
    Since $\int_MS_V\,d\nu_{x,t}=0$, the reverse Poincar\'e inequality \eqref{eq:reverse-Poincare} implies
    \[
        \left|\int_MuS_V\,d\nu_{x,t}\right|^2
        \le\frac{L_k(t)}{2t}
        \int_M\left(u-\int_Mu\,d\nu_{x,t}\right)^2d\nu_{x,t}.
    \]
    The preceding inequality shows that $u\mapsto\nabla_VP_tu(x)$ extends to a bounded linear functional on the mean-zero subspace of $L^2(d\nu_{x,t})$, with norm at most $\sqrt{L_k(t)/(2t)}$. By the Riesz representation theorem and \eqref{eq:score-duality}, its representing function is $S_V$. Hence
    \[
        \int_MS_V^2\,d\nu_{x,t}
        \le\frac{L_k(t)}{2t}.
    \]
    Multiplying by $2t$ and using \eqref{eq:Fisher-score} proves \eqref{eq:Fisher-upper}.
    
    We next prove \eqref{eq:data-processing}. Fix $0<r<t$. The Chapman--Kolmogorov identity and its source derivative give
    \begin{align}
        K(x,z,t)
        &=\int_MK(x,y,r)K(y,z,t-r)\,dg(y),\label{eq:first-chapman}\\
        K(x,z,t)S_V(z,t)
        &=\int_MK(x,y,r)S_V(y,r)K(y,z,t-r)\,dg(y). \label{eq:second-chapman}
    \end{align}
    On a noncompact manifold, the second equality is justified first on compact exhaustions; the $L^2$ bound for $S_V(\cdot,r)$ proved above allows passage to the minimal heat kernel. For fixed $z$, define
    \[
        d\pi_z(y)
        :=\frac{K(x,y,r)K(y,z,t-r)}{K(x,z,t)}\,dg(y).
    \]
    Equation \eqref{eq:first-chapman} says that $\int_Md\pi_z=1$. Dividing \eqref{eq:second-chapman} by $K(x,z,t)$ therefore gives
    \[
        S_V(z,t)=\int_MS_V(y,r)\,d\pi_z(y).
    \]
    Jensen's inequality gives
    \[
        S_V(z,t)^2K(x,z,t)
        \le\int_M S_V(y,r)^2K(x,y,r)K(y,z,t-r)\,dg(y).
    \]
    Integrating in $z$, using Fubini and $P_{t-r}1=1$, yields
    \[
        \int_MS_V(z,t)^2\,d\nu_{x,t}(z)
        \le\int_MS_V(y,r)^2\,d\nu_{x,r}(y).
    \]
    Together with \eqref{eq:Fisher-score}, this is \eqref{eq:data-processing}.
\end{proof}

\begin{lemma}\label{lem:source-fourth-moment}
    For every $x\in M$ and $t>0$,
    \begin{equation}\label{eq:source-fourth-moment}
        \int_M|d_xf|^4\,d\nu_{x,t}
        \le\frac{C_nL_k(t)^2}{t^2}.
    \end{equation}
\end{lemma}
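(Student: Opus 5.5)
We extend the duality argument used for \eqref{eq:Fisher-upper} from $L^2$ to $L^4$, with the reverse Poincar\'e inequality replaced by an entropic (reverse log-Sobolev) bound that gives exponential integrability of the score. Fix $x$, $t$, and a unit vector $V\in T_xM$, and let $S_V=-\nabla^x_Vf_{x,t}$ as in the proof of Proposition~\ref{prop:Fisher-estimates}. Since $|d_xf|^2=\sum_{i=1}^n S_{e_i}^2$ for an orthonormal basis $e_1,\ldots,e_n$ of $T_xM$, we have
\[
|d_xf|^4\le n\sum_{i}S_{e_i}^4 .
\]
So it suffices to prove $\int S_V^4\,d\nu_{x,t}\le C L_k(t)^2/t^2$ for each unit $V$.

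\textbf{Step 1: exponential moment bound for the score.} Apply \eqref{eq:reverse-log-Sobolev} to $u=e^{h}$, where $h$ is bounded and smooth. By \eqref{eq:score-duality}, $\nabla_VP_tu(x)=\int e^{h}S_V\,d\nu$. Writing $d\mu_h=e^h d\nu/\int e^h d\nu$, this yields
\[
\Bigl(\int S_V\,d\mu_h\Bigr)^2\le \frac{L_k(t)}{t}\,\mathrm{Ent}_\nu(\mu_h)=\frac{L_k(t)}{t}\,H(\mu_h\,|\,\nu).
\]
Every probability density with respect to $\nu$ (bounded above and below, then by approximation) has the form $d\mu_h/d\nu$. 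By the Donsker--Varadhan duality, a bound $|\int S_V\,d\mu|\le\sqrt{a\,H(\mu|\nu)}$ with $a=L_k/t$ for all $\mu$ is equivalent to a sub-Gaussian bound
\[
\log\int e^{sS_V}\,d\nu\le \frac{a s^2}{4}\qquad\text{for all } s\in\mathbb{R},
\]
using $\int S_V\,d\nu=0$. Concretely, take $\mu\propto e^{sS_V}\nu$ (truncating $S_V$ first) and set $\Lambda(s)=\log\int e^{sS_V}d\nu$. Then $\int S_V\,d\mu=\Lambda'(s)$ and $H=s\Lambda'(s)-\Lambda(s)$. The inequality $\Lambda'^2\le a(s\Lambda'-\Lambda)$ gives $\Lambda'\le as$ for $s>0$ by a short ODE/convexity argument, hence $\Lambda(s)\le as^2/2$.

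\textbf{Step 2: moments from the sub-Gaussian bound.} Sub-Gaussian tails give $\nu(|S_V|>\lambda)\le 2e^{-\lambda^2/(2a)}$, and integrating yields $\int S_V^4\,d\nu\le C a^2=C L_k(t)^2/t^2$. This proves \eqref{eq:source-fourth-moment}.

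\textbf{Expected obstacle: the approximation and duality step.} The functional inequality is stated only for bounded smooth $u$, while $e^{sS_V}$ is unbounded and merely measurable in $y$. We handle this by testing with $h=s\min(\max(S_V,-N),N)$, mollified so that it is smooth. The identity \eqref{eq:score-duality} extends to bounded measurable $u$ by $L^2$ continuity, which was established in the proof of \eqref{eq:Fisher-upper}. We then let $N\to\infty$ using monotone convergence, and use the finiteness of $\int S_V^2\,d\nu$ to control the truncated ODE inequality.

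A simpler alternative avoids the full exponential bound: the entropic inequality applied directly to the density $\propto S_V^2$ (mollified) already gives a fourth-moment estimate, via the Cauchy--Schwarz-type bound $\int S_V^3\lesssim$ entropy. If Step 1 proves messy, we fall back on this route.
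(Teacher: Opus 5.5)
Your proposal is correct and follows essentially the same route as the paper: the reverse logarithmic Sobolev inequality \eqref{eq:reverse-log-Sobolev} combined with the score duality \eqref{eq:score-duality} gives $|\int S_V\,d\mu|\le\sqrt{(L_k(t)/t)\,H(\mu\,|\,\nu_{x,t})}$, the entropy variational formula turns this into a sub-Gaussian bound for each $S_V$, and tail integration plus summing over an orthonormal basis gives the fourth moment. Your ODE step loses a factor of two in the constant relative to direct duality, which is harmless. The fallback via the density $\propto S_V^2$ is unnecessary, and as stated it would only control the signed third moment $\int S_V^3\,d\nu_{x,t}$, not the fourth.
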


\begin{proof}
    Fix a unit vector $V\in T_xM$ and write $S_V=S_V(\cdot,t)$. If $q$ is a probability density with respect to $\nu_{x,t}$, then \eqref{eq:reverse-log-Sobolev} and \eqref{eq:score-duality} give
    \[
        \left|\int_MS_Vq\,d\nu_{x,t}\right|
        \le
        \left(\frac{L_k(t)}t
        \operatorname{Ent}_{\nu_{x,t}}(q)\right)^{1/2}.
    \]
    The entropy variational formula therefore gives, for every $a\in\R$,
    \begin{align*}
        \log\int_Me^{aS_V}\,d\nu_{x,t}
        =\sup_q\left\{
        a\int_MS_Vq\,d\nu_{x,t}
        -\operatorname{Ent}_{\nu_{x,t}}(q)
        \right\}\le\sup_{\rho\ge0}
        \left\{|a|\sqrt{\rho L_k(t)/t}-\rho\right\}
        =\frac{a^2L_k(t)}{4t}.
    \end{align*}
    Thus each component of $d_xf$ is sub-Gaussian with variance bounded by $CL_k(t)/t$. Integrating the resulting Gaussian tail and summing over an orthonormal basis of $T_xM$ proves \eqref{eq:source-fourth-moment}.
\end{proof}

\subsection{Nash entropy}

The \textit{pointed Nash entropy} is
\begin{equation}\label{eq:Nash-definition}
    \ceN_x(t)
    :=\int_M f_{x,t}\,d\nu_{x,t}-\frac n2=-\int_MK(x,y,t)\log K(x,y,t)\,dg(y)
    -\frac n2\log(4\pi t)-\frac n2.
\end{equation}
See \cite{Colding,Nash1958,Ni,XuNash} for work on Nash entropy in the static setting and \cite{BamlerEntropy} for the Ricci-flow setting.

\begin{lemma}\label{lem:Nash-upper}
    For every $x\in M$ and $t>0$,
    \begin{equation}\label{eq:Nash-upper-basic}
        \ceN_x(t)
        \le\frac n2\int_0^t\frac{L_k(r)-1}{r}\,dr.
    \end{equation}
\end{lemma}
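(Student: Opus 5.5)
Differentiate $\ceN_x(t)$ in $t$, bound the derivative using the trace of the Fisher metric, and integrate from $0$ to $t$. The key identity to establish is
\[
    \frac{d}{dt}\ceN_x(t)=\frac{1}{2t}\bigl(\tr_g g^F_t-n\bigr),
\]
or more precisely $\frac{d}{dt}\ceN_x(t)=\frac1{2t}\sum_i(\lambda_i-1)$ evaluated at the source point. This is the pointwise (non-averaged) form of \eqref{eq:intro-box-nash}. Granting it, \eqref{eq:Fisher-upper} gives $\tr_g g^F_t\le nL_k(t)$, so
\[
    \frac{d}{dt}\ceN_x(t)\le\frac n2\,\frac{L_k(t)-1}{t}.
\]
Integrating over $[\epsilon,t]$ and letting $\epsilon\downarrow0$ gives \eqref{eq:Nash-upper-basic}, provided $\ceN_x(\epsilon)\to0$. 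The integrand is integrable at $0$ because $L_k(r)-1=O(kr)$.

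\textbf{Derivative computation.} Write $\ceN_x(t)=-\int K\log K\,dg-\frac n2\log(4\pi t)-\frac n2$. Since $K$ solves the heat equation in the target variable $y$,
\[
    \frac{d}{dt}\Bigl(-\int K\log K\Bigr)=-\int(\Delta_yK)(1+\log K)=\int\frac{|\nabla_yK|^2}{K}=\int|\nabla_yf|^2\,d\nu .
\]
This is the target Fisher information, not the source one. To pass to the source, use symmetry $K(x,y,t)=K(y,x,t)$ together with a semigroup identity. The cleanest route is to note that the target information equals $\int|\nabla_y f|^2d\nu=-\int\Delta_y\log K\,d\nu$, and to compare it with source derivatives via $\frac{d}{dt}$ applied to the Chapman--Kolmogorov identity $K(x,x,2t)=\int K(x,y,t)^2dg(y)$.

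\textbf{Route avoiding the exact identity.} This suffices for an inequality. Apply the reverse log-Sobolev inequality \eqref{eq:reverse-log-Sobolev} to $u=K(\cdot,y,s)$ and let $s\downarrow 0$; this bounds the relevant Fisher information. Alternatively, one can directly use the Li--Yau/Bakry--Ledoux bound $\int|\nabla_y f|^2d\nu\le \frac{n}{2t}L_k(t)$, the entropic Li--Yau inequality for $\Ric\ge-k$, which is exactly the dimensional form of \eqref{eq:reverse-log-Sobolev} in \cite{BakryLedoux}. In either case the required bound is
\[
    \frac{d}{dt}\ceN_x\le\frac{n}{2t}\bigl(L_k(t)-1\bigr).
\]

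\textbf{Limit at $t=0$.} Show $\ceN_x(t)\to0$ as $t\downarrow0$ using the short-time Euclidean asymptotics of the heat kernel (Minakshisundaram--Pleijel, local near $x$) together with Gaussian upper bounds from Li--Yau to control the tails. Doing only the inequality $\limsup_{t\downarrow 0}\ceN_x(t)\le0$ suffices.

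\textbf{Main obstacle.} The main obstacle is justifying the dimensional Fisher-information bound (the source/target identification). I would first try to prove the identity $\int|\nabla_yf|^2d\nu=\frac1{2t}\tr g^F_t$ via $\partial_t$ of the Chapman--Kolmogorov identity $K(x,z,t)=\int K(x,y,r)K(y,z,t-r)\,dg(y)$ differentiated in $r$. Failing that, I would fall back on citing the dimensional Bakry--Ledoux estimate.
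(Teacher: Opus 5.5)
\textbf{There is a genuine gap: the main route rests on a false identity.} The identity $\frac{d}{dt}\ceN_x(t)=\frac1{2t}(\tr\ceJ_t-n)$ is not true. You correctly compute that $\frac{d}{dt}\ceN_x(t)=\int|\nabla_yf|^2\,d\nu-\frac n{2t}$, which involves the \emph{target} Fisher information. By contrast, \eqref{eq:E-Fisher-trace} says that $\frac1{2t}(\tr\ceJ_t-n)=\int|d_xf|^2\,d\nu-\frac n{2t}$ equals $\square\ceN_x(t)=(\partial_t-\Delta_x)\ceN_x(t)$.

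The two quantities differ by $\Delta_x\ceN_x(t)$, and this is not zero in general. On a closed manifold, $\Delta_x\ceN_x(t)\equiv0$ would force $\ceN_x(t)$ to be independent of $x$. The short-time expansion $\ceN_x(t)=-\frac t2\scal(x)+O(t^2)$ shows this fails whenever $\scal$ is nonconstant. So no Chapman--Kolmogorov manipulation will identify source and target information. Moreover, \eqref{eq:Fisher-upper}, which only controls source derivatives, gives no pointwise bound on $\frac{d}{dt}\ceN_x$.

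Your first fallback also fails. With $u=K(\cdot,y,s)$, the left side of \eqref{eq:reverse-log-Sobolev} contains $\int K(x,z,t)K(z,y,s)\log K(z,y,s)\,dz\sim\frac n2K(x,y,t)\log\frac1s\to+\infty$. The limit $s\downarrow0$ therefore yields nothing.

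Your second fallback names the right estimate, \eqref{eq:target-Fisher}. However, that estimate is the entire content of the lemma, and you only cite it; in particular, you have not checked its precise $k>0$ form with the factor $L_k(t)$. The missing idea is the paper's argument, which works in the target variable. Set $I(s)=\int|\nabla_y\log K|^2\,d\nu_{z,s}$. The target Bochner formula, the inequality $|\nabla_y^2\log K|^2\ge\frac1n(\Delta_y\log K)^2$, $\Ric\ge-kg$, the identity $\int\Delta_y\log K\,d\nu=-I$, and Jensen together give the Riccati inequality $I'\le-\frac2nI^2+2kI$. Comparison with $I(s)\sim\frac n{2s}$ as $s\downarrow0$ then yields $I(s)\le\frac{nk}{1-e^{-2ks}}=\frac{nL_k(s)}{2s}$. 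Your derivative computation and your treatment of the limit $t\downarrow0$ are fine as they stand.

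\textbf{A way to rescue your idea.} Your instinct to use \eqref{eq:Fisher-upper} can still work if you replace the false pointwise identity with a parabolic comparison in the source variable. Set $B_k(t)=\frac n2\int_0^t\frac{L_k(r)-1}r\,dr$. Then \eqref{eq:E-Fisher-trace} and \eqref{eq:Fisher-upper} give
$\square\bigl(B_k(t)-\ceN_x(t)\bigr)=\frac{nL_k(t)-\tr\ceJ_t}{2t}\ge0$.
Lemma~\ref{lem:heat-potential} with $F=0$ then yields $\ceN_x(t)\le B_k(t)$, provided you verify its hypotheses:
\begin{itemize}
\item $\ceN_x(t)$ is bounded above uniformly in $x$ on finite time intervals. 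This follows, for example, from Gibbs' inequality against a density proportional to $e^{-d(x,\cdot)^2/4t}$, together with Bishop--Gromov and the Li--Yau Gaussian bound.
\item $\ceN_x(t)\to0$ locally uniformly as $t\downarrow0$.
\end{itemize}
This is the mechanism the paper itself uses in Proposition~\ref{prop:Nash-entropy}. It avoids the target Bochner computation. The paper still needs \eqref{eq:target-Fisher} later, in the proof of Theorem~\ref{thm:Fisher-decay}.
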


\begin{proof}
    Fix $z\in M$ and set
    \[
        I_z(s):=\int_M|\nabla_y\log K(z,y,s)|^2\,d\nu_{z,s}(y).
    \]
    The standard Bochner calculation in the target variable gives
    \[
        \frac d{ds}\int_M|\nabla_y\log K|^2\,d\nu_{z,s}
        =-2\int_M\left(
        |\nabla_y^2\log K|^2
        +\Ric(\nabla_y\log K,\nabla_y\log K)
        \right)d\nu_{z,s}.
    \]
    On a noncompact manifold this identity follows by inserting the smooth distance cutoffs of Greene--Wu \cite{GreeneWu}. The Li--Yau Gaussian and gradient bounds make the error terms tend to zero. We use the same cutoff construction in the proof of Theorem~\ref{thm:Fisher-decay}. Since $\Ric\ge-kg$,
    \[
        \frac d{ds}\int_M|\nabla_y\log K|^2\,d\nu_{z,s}
        \le-\frac2n\int_M(\Delta_y\log K)^2\,d\nu_{z,s}
        +2k\int_M|\nabla_y\log K|^2\,d\nu_{z,s}.
    \]
    The formula $P_t1=1$ and integration by parts give
    \[
        \int_M\Delta_y\log K\,d\nu_{z,s}
        =-\int_M|\nabla_y\log K|^2\,d\nu_{z,s}.
    \]
    Hence Jensen's inequality implies
    \[
        \frac d{ds}\int_M|\nabla_y\log K|^2\,d\nu_{z,s}
        \le
        -\frac2n\left(\int_M|\nabla_y\log K|^2\,d\nu_{z,s}\right)^2
        +2k\int_M|\nabla_y\log K|^2\,d\nu_{z,s}.
    \]
    The short-time heat-kernel asymptotics, localized by the Gaussian bound, show that the integral is asymptotic to $n/(2s)$ as $s\downarrow0$; see \cite{XuNash}. Scalar comparison for the preceding differential inequality therefore gives
    \begin{equation}\label{eq:target-Fisher}
        \int_M|\nabla_y\log K|^2\,d\nu_{z,s}
        \le\frac{nk}{1-e^{-2ks}}
        =\frac{nL_k(s)}{2s}
        \le C_n\left(\frac1s+k\right),
    \end{equation}
    with the usual limiting interpretation when $k=0$.
    
    Finally,
    \[
        \frac{d}{dt}\ceN_x(t)
        =\int_M|\nabla_y\log K(x,y,t)|^2\,d\nu_{x,t}(y)
        -\frac n{2t}.
    \]
    Since $\ceN_x(t)\to0$ as $t\downarrow0$, integration of \eqref{eq:target-Fisher} proves \eqref{eq:Nash-upper-basic}. The corresponding entropy calculation on complete noncompact manifolds is also discussed in \cite{Ni}.
\end{proof}

\subsection{Fisher metric and Nash entropy}\label{sec:corrected-source}

A direct calculation from \eqref{eq:Nash-definition}, using $\square(K\log K)=-K|d_x\log K|^2$, gives the relation between the Fisher metric and the Nash entropy:
\begin{equation}\label{eq:E-Fisher-trace}
    E(x,t):=\square\ceN_x(t)
    =\int_M|d_xf|^2\,d\nu_{x,t}-\frac n{2t}
    =\frac{\tr\ceJ_t-n}{2t}.
\end{equation}
To exploit this identity, we compute the evolution of $-t^2E$ and use Lemma~\ref{lem:heat-potential} to control $\ceN_x(t)$. However, the Bochner formula introduces a Ricci term, so the resulting $\Box E$ need not be nonnegative when $k>0$. We therefore add a scalar correction that compensates for the lower bound $\Ric\ge-kg$. This produces a nonnegative quantity $Q_k$ whose heat flow is uniformly controlled. Similar calculations for Ricci flow appear in \cite{chowkoirala2026fisher}.

Define
\begin{align}
    Q_k(x,t)
    :={}&4t^2\int_M\left(
    \left|\nabla_x^2f-\frac1{2t}g\right|^2
    +\Ric(\nabla_xf,\nabla_xf)
    \right)d\nu_{x,t}
    +2nktL_k(t),
    \label{eq:Qk-definition}\\
    \cE_k(x,t)
    :={}&-t^2E(x,t)
    +nk\int_0^t rL_k(r)\,dr.
    \label{eq:Ek-definition}
\end{align}

\begin{proposition}\label{prop:corrected-source}
    The quantities $Q_k$ and $\cE_k$ satisfy
    \begin{equation}\label{eq:corrected-source-identity}
        Q_k\ge0,
        \qquad
        \square\cE_k=\frac12Q_k,
        \qquad
        0\le\cE_k(x,t)
        \le C_nt L_k(t)^2.
    \end{equation}
    Consequently,
    \begin{equation}\label{eq:Qk-potential}
        \int_0^T
        P_{T-r}Q_k(\cdot,r)(x)\,dr
        \le2\cE_k(x,T)
        \le C_nT L_k(T)^2.
    \end{equation}
\end{proposition}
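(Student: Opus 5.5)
The plan is to compute $\square(-t^2E)$ directly, using the fact that $K(\cdot,y,t)$ solves the heat equation in the source variable. The other claims then follow from the pointwise bounds of Proposition~\ref{prop:Fisher-estimates} and from Lemma~\ref{lem:heat-potential}.

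\emph{Step 1 (the identity).} For fixed $y$, write $\ell=\log K(x,y,t)$ as a function of $(x,t)$. Since $\square K=0$ in $x$, we have $\square\ell=|\nabla_x\ell|^2$. The Bochner formula in the source variable then gives the pointwise identity
\[
\square\bigl(K|\nabla_x\ell|^2\bigr)=-2K\bigl(|\nabla_x^2\ell|^2+\Ric(\nabla_x\ell,\nabla_x\ell)\bigr).
\]
Integrating in $y$ and using $\nabla_x\ell=-\nabla_xf$ and $\nabla_x^2\ell=-\nabla_x^2f$, this yields
\[
\square\int_M|d_xf|^2\,d\nu_{x,t}=-2\int_M\bigl(|\nabla_x^2f|^2+\Ric(\nabla_xf,\nabla_xf)\bigr)\,d\nu_{x,t}.
\]
The identity $\int_M\Delta_xK\,dg(y)=\Delta_xP_t1=0$ gives $\int_M\Delta_xf\,d\nu=\int_M|\nabla_xf|^2\,d\nu$. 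Expanding $|\nabla^2_xf-\tfrac1{2t}g|^2$ with this, the terms $-2tE$ from differentiating $t^2$ and the $n/(2t)$ piece of $E$ should combine exactly into
\[
\square(-t^2E)=2t^2\int_M\Bigl(\bigl|\nabla_x^2f-\tfrac1{2t}g\bigr|^2+\Ric(\nabla_xf,\nabla_xf)\Bigr)\,d\nu_{x,t}.
\]
I would track the constants carefully at this point; it is a bookkeeping check, not a conceptual one. The correction term depends only on $t$, so
\[
\square\Bigl(nk\int_0^t rL_k(r)\,dr\Bigr)=nktL_k(t).
\]
Adding the two gives $\square\cE_k=\tfrac12Q_k$.

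\emph{Step 2 (sign and bounds).} Since $\Ric\ge-kg$, \eqref{eq:E-Fisher-trace} and \eqref{eq:Fisher-upper} give
\[
\int_M\Ric(\nabla_xf,\nabla_xf)\,d\nu\ge-k\,\frac{\tr\ceJ_t}{2t}\ge-\frac{nkL_k(t)}{2t}.
\]
Multiplying by $4t^2$ shows that this term is cancelled by $2nktL_k(t)$, so $Q_k\ge0$.

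For the upper bound, $-t^2E=\tfrac t2(n-\tr\ceJ_t)\le\tfrac{nt}2\le\tfrac{nt}2L_k(t)^2$, because $L_k\ge1$. Also $L_k$ is increasing and $2kt\le L_k(t)$, so
\[
nk\int_0^t rL_k(r)\,dr\le nkt^2L_k(t)\le\tfrac n2 tL_k(t)^2.
\]
For the lower bound, \eqref{eq:Fisher-upper} gives $|t^2E|\le\tfrac{nt}2(L_k(t)+1)$. Hence $\cE_k$ is bounded below on $M\times(0,T]$, and $\cE_k(\cdot,t)\to0$ uniformly as $t\downarrow0$.

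Lemma~\ref{lem:heat-potential} with $u=\cE_k$ and $F=\tfrac12Q_k\ge0$ then gives
\[
\cE_k(x,T)\ge\tfrac12\int_0^TP_{T-r}Q_k(\cdot,r)(x)\,dr\ge0.
\]
This yields $\cE_k\ge0$ and \eqref{eq:Qk-potential} at the same time.

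\emph{Main obstacle.} The delicate point is justifying Step 1 on a complete noncompact manifold. One needs to differentiate under the integral in $y$, to use $\int_M\Delta_xK\,dg(y)=0$, and to know that $\cE_k$ and $Q_k$ are smooth in $(x,t)$. This in turn requires integrability of $|\nabla_x^2f|^2$ and $|\partial_t\nabla_xf|$ against $\nu_{x,t}$, locally uniformly in $(x,t)$.

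I would handle this with Li--Yau Gaussian upper bounds and gradient estimates for $K$, together with the fourth-moment bound of Lemma~\ref{lem:source-fourth-moment}. For the Hessian, I would use local parabolic (Schauder) estimates applied to $K(\cdot,y,\cdot)$ on parabolic cylinders around $(x,t)$, which bound $|\nabla_x^2K|$ by a Gaussian envelope. Dominated convergence then legitimizes every interchange. If that approach is awkward, the fallback is to run the computation on a compact exhaustion with Dirichlet heat kernels and pass to the limit using minimality of $K$.
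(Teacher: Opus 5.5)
Your proposal is correct and follows the paper's proof essentially step for step. The paper obtains $\square(-t^2E)$ from a single pointwise Bochner identity whose extra term $2t\operatorname{div}_x(K\nabla_xf)=-2t\Delta_xK$ integrates to zero in $y$, which is exactly your use of $\int_M\Delta_xK\,dg(y)=0$. It then derives $Q_k\ge0$, the two-sided bounds on $\cE_k$, and \eqref{eq:Qk-potential} via Lemma~\ref{lem:heat-potential} as you do, with your slightly cruder but valid bound $nkt^2L_k(t)$ on the correction term, and justifies the noncompact case by the same Gaussian-bound and local-parabolic-estimate exhaustion argument.
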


\begin{proof}
    For fixed $y$, the Bochner formula gives
    \begin{align*}
        \square\left[
        t^2K\left(\frac n{2t}-|d_xf|^2\right)
        \right]
        =2t^2K\left(
        \left|\nabla_x^2f-\frac1{2t}g\right|^2
        +\Ric(\nabla_xf,\nabla_xf)
        \right)+2t\operatorname{div}_x(K\nabla_xf).
    \end{align*}
    The last term integrates to zero in $y$, because
    \[
        \int_MK\nabla_xf\,dg(y)
        =-\int_M\nabla_xK\,dg(y)=0.
    \]
    Since
    \[
        -t^2E
        =t^2\left(\frac n{2t}
        -\int_M|d_xf|^2\,d\nu_{x,t}\right),
    \]
    integration in $y$ gives
    \[
        \square(-t^2E)
        =2t^2\int_M\left(
        \left|\nabla_x^2f-\frac1{2t}g\right|^2
        +\Ric(\nabla_xf,\nabla_xf)
        \right)d\nu_{x,t}.
    \]
    Since the time derivative of the correction term in \eqref{eq:Ek-definition} is $nktL_k(t)$, this proves $\square\cE_k=Q_k/2$.
    
    By \eqref{eq:Fisher-upper},
    \[
        \langle\Ric,g^F_t\rangle
        \ge-k\tr\ceJ_t
        \ge-nkL_k(t).
    \]
    Moreover,
    \[
        4t^2\int_M\Ric(\nabla_xf,\nabla_xf)\,d\nu_{x,t}
        =2t\langle\Ric,g^F_t\rangle.
    \]
    It follows directly from \eqref{eq:Qk-definition} that $Q_k\ge0$. Furthermore,
    \[
        -t^2E=\frac t2(n-\tr\ceJ_t)\le\frac{nt}{2},
    \]
    while monotonicity of $L_k$ and $2kt\le L_k(t)$ imply
    \[
        nk\int_0^t rL_k(r)\,dr
        \le\frac{nkt^2}{2}L_k(t)
        \le\frac{nt}{4}L_k(t)^2.
    \]
    This proves the upper bound for $\cE_k$.
    
    It remains to prove the lower bound and the heat flow estimate. On each finite time interval, \eqref{eq:Fisher-upper} gives
    \[
        \cE_k(x,t)
        \ge\frac{nt}{2}(1-L_k(t)).
    \]
    The right-hand side is bounded on compact time intervals and tends to zero as $t\downarrow0$. Lemma~\ref{lem:heat-potential}, applied with $F=Q_k/2$, therefore gives
    \[
        \cE_k(x,T)\ge\frac12\int_0^T
        P_{T-r}Q_k(\cdot,r)(x)\,dr\ge0.
    \]
    This proves both the lower bound in \eqref{eq:corrected-source-identity} and \eqref{eq:Qk-potential}.
    
    On a noncompact manifold, the differentiations above are justified on compact exhaustions. Local parabolic estimates and the heat-kernel gradient estimate of Kotschwar \cite{Kotschwar}, together with the Gaussian upper bound, give uniform integrable majorants on compact subsets of the source space-time variables, so the exhaustion limit is valid.
\end{proof}

\section{Exterior powers and Fisher eigenvalue decay}\label{sec:exterior-power}

In this section we prove the main analytic estimate for the Fisher eigenvalues.

\begin{theorem}\label{thm:Fisher-decay}
    Under the hypotheses of Theorem~\ref{thm:volume}, for every $x\in M$ and $t>0$, there are constants $C=C(n,m)<\infty$ and $C_n=C(n)<\infty$ such that
    \begin{equation}\label{eq:Fisher-main}
        P_t\bigl(\lambda_{n-m+1}(\cdot,t)\bigr)(x)
        \le
        \min\left\{
        L_k(t),
        C e^{C_nkt}L_k(2t)^{1+1/m}
        (\kappa t)^{-1/m}
        \right\}.
    \end{equation}
    In particular, if $kt\le1$, then
    \begin{equation}\label{eq:Fisher-local}
        P_t\bigl(\lambda_{n-m+1}(\cdot,t)\bigr)(x)
        \le C\min\{1,(\kappa t)^{-1/m}\}.
    \end{equation}
\end{theorem}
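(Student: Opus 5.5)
The first bound in \eqref{eq:Fisher-main} is immediate. By \eqref{eq:Fisher-upper}, every eigenvalue satisfies $\lambda_i(\cdot,t)\le L_k(t)$ pointwise, and $P_t1=1$, so $P_t\lambda_{n-m+1}\le L_k(t)$. For the second bound I will work with the $m$th exterior power. Let $\sigma_m(x,t):=\tr\Lambda^m\ceJ_t=\sum_{|I|=m}\prod_{i\in I}\lambda_i$. Since the $m$ largest eigenvalues are all $\ge\lambda_{n-m+1}$, we have $\lambda_{n-m+1}^m\le\sigma_m$. By Jensen (concavity of $s\mapsto s^{1/m}$), it then suffices to show
\[
    P_t\sigma_m(\cdot,t)(x)\le Ce^{C_nkt}L_k(2t)^{m+1}(\kappa t)^{-1}.
\]
Indeed, $P_t\lambda_{n-m+1}\le (P_t\sigma_m)^{1/m}$, and this gives exactly the exponent $1+1/m$.

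To bound $\sigma_m$, I will write $\Lambda^m g^F_t$ as $2^mt^m$ times the $\nu_{x,t}^{\otimes m}$-average of $|d_xf(y_1)\wedge\cdots\wedge d_xf(y_m)|^2$. Equivalently, it is the Gram determinant of the scores at $m$ independent target points. The key pointwise input is a weighted Bochner--Weitzenb\"ock identity for the $m$-form $\omega=d_xf_1\wedge\cdots\wedge d_xf_m$, viewed as a function of the source point, where $f_j=f_{x,t}(y_j)$. The Weitzenb\"ock curvature term on $m$-forms is bounded below by $\cC_m|\omega|^2\ge\kappa|\omega|^2$; this is precisely the Brendle--Hirsch--Johne observation that the Hodge Laplacian on decomposable $m$-forms sees $\cC_m$. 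The remaining terms are Hessians $\nabla^2_xf_j$. After subtracting $\frac1{2t}g$ they are controlled by the $Q_k$ integrand, and the $\Ric(\nabla f,\nabla f)$ pieces are absorbed using $\Ric\ge-kg$ and the correction $2nktL_k$. Concretely, I aim for the inequality
\[
    \kappa\,t\,\sigma_m(\cdot,t)\le C\bigl(L_k(t)^{m-1}Q_k(\cdot,t)+kt\,L_k(t)^m\bigr)+(\text{a divergence/time-derivative term in }t^2\square\text{ of a bounded quantity}).
\]
Lemma~\ref{lem:source-fourth-moment} will handle the cross terms of the form $|\nabla^2 f|\,|df|^{2m-1}$ after Cauchy--Schwarz, giving the $L_k^{m-1}$ factors.

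Next I integrate this against the heat flow over times $r\in[t,2t]$, at the source point, using the potential bound \eqref{eq:Qk-potential} with $T=2t$:
\[
    \int_t^{2t}P_{2t-r}Q_k(\cdot,r)(x)\,dr\le C_ntL_k(2t)^2.
\]
The data-processing inequality \eqref{eq:data-processing} gives $\sigma_m(\cdot,t)/t^m\ge\sigma_m(\cdot,r)/r^m$ for $r\ge t$, but I need the reverse direction. So I will instead compare $P_t\sigma_m(\cdot,t)$ with $P_{2t-r}\sigma_m(\cdot,r)$ for $r\in[t/2,t]$. I use monotonicity \eqref{eq:data-processing} in the form $\sigma_m(\cdot,t)\le 2^m\sigma_m(\cdot,r)$, together with the same-point Harnack \eqref{eq:same-point-Harnack} to replace $P_t$ by $P_{2t-r}$ at cost $C_ne^{C_nkt}$. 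Averaging over $r$ then yields
\[
    \kappa t\,P_t\sigma_m\le Ce^{C_nkt}t^{-1}\int_{t/2}^{t}P_{2t-r}\bigl(L_k^{m-1}Q_k+\dots\bigr)\,dr\le Ce^{C_nkt}L_k(2t)^{m+1},
\]
which is the required bound. The local statement \eqref{eq:Fisher-local} follows since $L_k(2t)\le L_k(2)$ and $e^{C_nkt}$ are bounded when $kt\le1$.

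The main obstacle is the exterior-power Weitzenb\"ock step: producing a clean inequality in which the curvature term $\cC_m$ appears with a positive sign while all Hessian contributions are dominated by the single nonnegative quantity $Q_k$. Here the trace part $\frac1{2t}g$ must cancel correctly, because it is what makes the Euclidean case have $Q_k=0$. I will first try to do this at the level of the Gram determinant of $m$ independent scores. I will use $\square$ applied to $t^{m+1}K^{\otimes m}|\omega|^2$ and integration by parts in the source, mimicking the proof of Proposition~\ref{prop:corrected-source}. The noncompact justification will use the Greene--Wu cutoffs as before.
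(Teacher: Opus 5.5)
Your outer scaffolding matches the paper. That includes the first bound from \eqref{eq:Fisher-upper}, the reduction to $\sigma_m(\ceJ_t)$ via \eqref{eq:sigma-controls-lambda} and Jensen, and the averaging over $r\in[t/2,t]$ using $\sigma_m(\ceJ_t)\le2^m\sigma_m(\ceJ_r)$, \eqref{eq:same-point-Harnack} and \eqref{eq:Qk-potential}. The gap is in the curvature step, which is the real content of the theorem. Your claim that $\langle\cR_m\omega,\omega\rangle\ge\cC_m|\omega|^2$ for decomposable $m$-forms is false. By \eqref{eq:curvature-on-form}, $\langle\cR_me^I,e^I\rangle$ contains only the mixed curvatures $\Rm(e_i,e_j,e_i,e_j)$ with $i\in I$, $j\notin I$, and it omits those internal to $E_I$. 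For $m=n-1$, $\cR_{n-1}$ is Hodge dual to $\Ric$. So on $\bS^2(a)\times\R^{n-2}$ it vanishes on the $(n-1)$-form dual to a flat direction, while $\cC_{n-1}=\scal/2=a^{-2}$.

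Your parabolic computation of $\square\bigl(t^{m+1}K^{\otimes m}|\omega|^2\bigr)$ does not repair this. Each $d_xK(\cdot,y_j,t)$ evolves by the Hodge Laplacian on $1$-forms, so the only curvature it produces is the derivation extension of $\Ric$, namely $|\omega|^2\tr_E\Ric$. Under $\cC_m\ge\kappa$ and $\Ric\ge-kg$, $\tr_E\Ric$ has no positive lower bound. For example, take $\bS^3(a)\times\bH^2(-B)$ with $m=4$ and $2a^{-2}<B<3a^{-2}$, and let $E=\nu^\perp$ with $\nu$ tangent to the sphere; then $\tr_E\Ric=4a^{-2}-2B<0$ while $\cC_4=3a^{-2}-B>0$.

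The missing idea is the identity \eqref{eq:curvature-algebra}, $\tr_{E_I}\Ric+\langle\cR_me^I,e^I\rangle=2\cC_m(E_I)$. Applied in an eigenbasis of $\ceJ_t$, it gives \eqref{eq:curvature-lower}. Neither term on the left is used as a source of positivity; both are bounded \emph{above}.

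\begin{itemize}
\item The $\cR_m$ term is bounded by the integrated weighted Weitzenb\"ock inequality, $\int\langle\cR_m\eta,\eta\rangle=\int(|d\eta|^2+|\delta\eta|^2-|\nabla\eta|^2)\le C\int|\nabla\eta|^2$, with $\eta=\chi\sqrt w\,\Omega_{\mathbf y}$ and $w=K(z,\cdot,s)$. This produces $|\nabla\Omega_{\mathbf y}|^2$, controlled by Lemma~\ref{lem:wedge-energy}, and $|d\log w|^2$, controlled by \eqref{eq:target-Fisher}.
\item The Ricci trace is bounded by $Q_k$ purely algebraically. Use $\sigma_{m-1}(\lambda_1,\ldots,\widehat{\lambda_i},\ldots,\lambda_n)\le CL_k^{m-1}$, discard the terms with $r_i<0$, and use $\sum_{r_i>0}r_i\lambda_i\le\langle\Ric,g^F_t\rangle+nkL_k(t)\le Q_k/(2t)$. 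This is where $\Ric\ge-kg$ and the correction $2nktL_k$ actually enter, rather than through absorbing Bochner terms.
\end{itemize}

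With these two bounds, the fixed-time estimate \eqref{eq:weighted-eigenvalue} needs no heat-operator computation for $\sigma_m$ at all, and your averaging argument then completes the proof.
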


To prove \eqref{eq:intro-Fisher-decay}, we pass to the $m$th exterior power of the Fisher metric, whose trace dominates $\lambda_{n-m+1}^m$; see \eqref{eq:sigma-controls-lambda}. Positive $m$-intermediate curvature enters through the weighted Hodge--Weitzenb\"ock formula \eqref{eq:curvature-lower} and converts this exterior-power quantity into curvature terms controlled by $Q_k$; see Lemmas~\ref{lem:weighted-Weitzenbock} and~\ref{lem:Ricci-contraction}. After suppressing the $L_k$-factors, one obtains schematically
\[
    \kappa P_s(\lambda_{n-m+1}(\cdot,t)^m)
    \lesssim t^{-1}(P_sQ_k(\cdot,t)+1),
    \qquad s\simeq t;
\]
see \eqref{eq:weighted-eigenvalue}. Proposition~\ref{prop:corrected-source}, followed by averaging over comparable times using \eqref{eq:data-processing} and taking the $m$th root, gives the second term in \eqref{eq:Fisher-main} and hence \eqref{eq:Fisher-local}.

\subsection{Exterior powers}

We first isolate the exterior-power quantity that controls the desired eigenvalue. Set
\begin{equation}\label{eq:sigma-m}
    \sigma_m(\ceJ_t)
    :=\tr_{\Lambda^m}\left(\bigwedge^m\ceJ_t\right).
\end{equation}
In a $g$-orthonormal eigenbasis of $\ceJ_t$,
\begin{equation}\label{eq:sigma-controls-lambda}
    \sigma_m(\ceJ_t)
    =\sum_{|I|=m}\prod_{i\in I}\lambda_i
    \ge\lambda_{n-m+1}(x,t)^m.
\end{equation}
Thus $\sigma_m(\ceJ_t)$ measures the $m$-dimensional size of the Fisher metric. The inequality follows by retaining the term $I=\{n-m+1,\ldots,n\}$. To prove Theorem~\ref{thm:Fisher-decay}, it therefore suffices to control a heat average of $\sigma_m(\ceJ_t)$.

To connect this algebraic quantity with the heat kernel, define, for fixed $t>0$ and $y\in M$, the source one-form
\begin{equation}\label{eq:alpha}
    \alpha_y(x):=\sqrt{2t K(x,y,t)}\,d_xf(x,y,t).
\end{equation}
Then
\begin{equation}\label{eq:alpha-covariance}
    g^F_t=\int_M\alpha_y\otimes\alpha_y\,dg(y).
\end{equation}
For $\mathbf y=(y_1,\ldots,y_m)$, put
\begin{equation}\label{eq:Omega}
    \Omega_{\mathbf y}
    :=\alpha_{y_1}\wedge\cdots\wedge\alpha_{y_m}.
\end{equation}

Geometrically, $\Omega_{\mathbf y}$ is the oriented $m$-dimensional volume element generated by the one-forms $\alpha_{y_1},\ldots,\alpha_{y_m}$. The first identity below says that its averaged squared size is exactly $\bigwedge^m g_t^F$; after taking the trace, this is $\sigma_m(\ceJ_t)$. The second estimate controls the spatial variation of these volume elements by the nonnegative term $Q_k$ and a lower-order term.

\begin{lemma}\label{lem:wedge-energy}
    At every $(x,t)$,
    \begin{align}
        \frac1{m!}\int_{M^m}
        \Omega_{\mathbf y}\otimes\Omega_{\mathbf y}\,d\mathbf y
        &=\bigwedge^m g^F_t,
        \label{eq:Cauchy-Binet}\\
        \frac1{m!}\int_{M^m}|\nabla\Omega_{\mathbf y}|^2\,d\mathbf y
        &\le C\left[
        \frac{L_k(t)^{m-1}}t Q_k(x,t)
        +\frac{L_k(t)^{m+1}}t
        \right].
        \label{eq:wedge-gradient}
    \end{align}
\end{lemma}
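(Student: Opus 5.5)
The identity \eqref{eq:Cauchy-Binet} is linear algebra, so I would settle it first. By \eqref{eq:alpha-covariance}, $g^F_t$ is the covariance $\int\alpha_y\otimes\alpha_y\,dg(y)$ of the family $\{\alpha_y\}$. Expand $\Omega_{\mathbf y}\otimes\Omega_{\mathbf y}$ by Cauchy--Binet: for $m$-vectors $v_1\wedge\cdots\wedge v_m$ and $w_1\wedge\cdots\wedge w_m$, the pairing of $\Omega_{\mathbf y}\otimes\Omega_{\mathbf y}$ with them is $\det[\alpha_{y_i}(v_j)]\det[\alpha_{y_i}(w_j)]$. Expanding both determinants as sums over permutations and integrating each $y_i$ separately via Fubini produces $\sum_{\sigma,\tau}\operatorname{sgn}(\sigma\tau)\prod_i g^F_t(v_{\sigma(i)},w_{\tau(i)})=m!\det[g^F_t(v_j,w_l)]$. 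This is exactly $m!$ times $\bigwedge^m g^F_t$ evaluated on the two $m$-vectors. Integrability is automatic, since each factor is in $L^2(dg(y))$ by \eqref{eq:Fisher-upper}.

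For \eqref{eq:wedge-gradient}, I would use the Leibniz rule:
\[
    \nabla\Omega_{\mathbf y}=\sum_{j}\alpha_{y_1}\wedge\cdots\wedge\nabla\alpha_{y_j}\wedge\cdots\wedge\alpha_{y_m}.
\]
Then $|\nabla\Omega_{\mathbf y}|^2\le m\sum_j|\nabla\alpha_{y_j}|^2\prod_{i\neq j}|\alpha_{y_i}|^2$, using $|a\wedge b|\le|a||b|$. Integrating over the variables $y_i$ with $i\neq j$ turns each factor into $\int|\alpha_y|^2dg(y)=\tr g^F_t\le nL_k(t)$. This gives
\[
    \frac1{m!}\int|\nabla\Omega_{\mathbf y}|^2\,d\mathbf y\le C\,L_k(t)^{m-1}\int_M|\nabla\alpha_y|^2\,dg(y).
\]
So everything reduces to the one-form estimate
\[
    \int_M|\nabla\alpha_y|^2\,dg(y)\le \frac Ct\bigl(Q_k+L_k(t)^2\bigr).
\]

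To prove the one-form estimate, write $\alpha_y=\sqrt{2tK}\,df$ with $\log K=-f+\text{const}$. Then $\nabla\sqrt K=-\tfrac12\sqrt K\,df$, so
\[
    \nabla\alpha_y=\sqrt{2tK}\bigl(\nabla^2f-\tfrac12 df\otimes df\bigr).
\]
It is convenient to insert $\frac1{2t}g$ and write
\[
    \nabla^2f-\tfrac12df\otimes df=\bigl(\nabla^2f-\tfrac1{2t}g\bigr)+\tfrac1{2t}g-\tfrac12df\otimes df.
\]
Squaring and integrating against $dg(y)$ then bounds $\int|\nabla\alpha_y|^2$ by $C\,t$ times three terms:
\[
    \int\Bigl|\nabla^2f-\tfrac1{2t}g\Bigr|^2d\nu,\qquad \frac{n}{t^2},\qquad \int|df|^4d\nu.
\]
The fourth moment is at most $C_nL_k(t)^2/t^2$ by Lemma~\ref{lem:source-fourth-moment}. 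The constant term is $O(1/t^2)$ and is dominated by it, since $L_k\ge1$. Multiplying by $t$ gives $O(L_k(t)^2/t)$ for these two terms.

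The Hessian term is where $Q_k$ enters, and I expect it to be the main step. From \eqref{eq:Qk-definition},
\[
    4t^2\int\Bigl|\nabla^2f-\tfrac1{2t}g\Bigr|^2d\nu=Q_k-2t\langle\Ric,g^F_t\rangle-2nktL_k(t).
\]
The lower bound $\Ric\ge-kg$ together with \eqref{eq:Fisher-upper} gives $-2t\langle\Ric,g^F_t\rangle\le 2nktL_k(t)$, so this difference is at most $Q_k$. Hence $t\int|\nabla^2f-\tfrac1{2t}g|^2d\nu\le Q_k/(4t)$. Summing the three contributions gives $\int|\nabla\alpha_y|^2dg(y)\le C(Q_k+L_k^2)/t$, and combining with the reduction above yields \eqref{eq:wedge-gradient}.

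The only delicate issue is justifying the differentiation under the integral on noncompact $M$. For that I would rely on the Gaussian and Kotschwar gradient bounds, as in the proof of Proposition~\ref{prop:corrected-source}.
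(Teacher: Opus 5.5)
Your proposal is correct and follows essentially the same route as the paper's proof. You use Binet--Cauchy for the first identity, with the determinant expansion written out. For the second you use the same Leibniz/Cauchy--Schwarz bound $|\nabla\Omega_{\mathbf y}|^2\le m\sum_j|\nabla\alpha_{y_j}|^2\prod_{\ell\ne j}|\alpha_{y_\ell}|^2$ together with $\int_M|\alpha_y|^2\,dg(y)=\tr\ceJ_t\le nL_k(t)$. The one-form estimate likewise uses the paper's splitting of $\nabla^2f-\frac12df\otimes df$ around $\frac1{2t}g$, the fourth-moment lemma, and the Hessian-defect bound $Q_k\ge4t^2\int_M|\nabla^2f-\frac1{2t}g|^2\,d\nu_{x,t}$.
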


\begin{proof}
    Equation \eqref{eq:Cauchy-Binet} is the Binet--Cauchy identity applied to \eqref{eq:alpha-covariance}. Differentiating \eqref{eq:alpha} gives
    \[
        \nabla\alpha_y
        =\sqrt{2t K}\left(
        \nabla_x^2f-\frac12d_xf\otimes d_xf
        \right).
    \]
    By \eqref{eq:source-fourth-moment} and \eqref{eq:Qk-definition},
    \begin{equation}\label{eq:alpha-gradient}
        \int_M|\nabla\alpha_y|^2\,dg(y)
        \le C_n\left(
        \frac{Q_k(x,t)}t
        +\frac{L_k(t)^2}t
        \right).
    \end{equation}
    Indeed, the Ricci lower bound and \eqref{eq:Fisher-upper} imply
    \begin{equation}\label{eq:Hessian-defect}
        Q_k(x,t)
        \ge4t^2\int_M
        \left|\nabla_x^2f-\frac1{2t}g\right|^2d\nu_{x,t}.
    \end{equation}
    Moreover,
    \[
        \nabla_x^2f-\frac12d_xf\otimes d_xf
        =\left(\nabla_x^2f-\frac1{2t}g\right)
        +\frac12\left(\frac1t g-d_xf\otimes d_xf\right).
    \]
    Since
    \[
        \int_M|\nabla\alpha_y|^2\,dg(y)
        =2t\int_M
        \left|\nabla_x^2f-\frac12d_xf\otimes d_xf\right|^2d\nu_{x,t},
    \]
    equation \eqref{eq:Hessian-defect} and the fourth-moment estimate \eqref{eq:source-fourth-moment} prove \eqref{eq:alpha-gradient}. Also,
    \begin{equation}\label{eq:alpha-L2}
        \int_M|\alpha_y|^2\,dg(y)
        =\tr\ceJ_t\le nL_k(t).
    \end{equation}
    Differentiating \eqref{eq:Omega} and applying Cauchy--Schwarz gives
    \[
        |\nabla\Omega_{\mathbf y}|^2
        \le m\sum_{j=1}^m|\nabla\alpha_{y_j}|^2
        \prod_{\ell\ne j}|\alpha_{y_\ell}|^2.
    \]
    Integrating the variables separately and using \eqref{eq:alpha-gradient} and \eqref{eq:alpha-L2} proves
    \eqref{eq:wedge-gradient}.
\end{proof}

\subsection{Weitzenb\"ock formula}

The curvature hypothesis in Theorem~\ref{thm:volume} enters through the Hodge--Weitzenb\"ock formula on $m$-forms (see \eqref{eq:curvature-lower}). Let $\cR_m$ denote its curvature endomorphism:
\begin{equation}\label{eq:Weitzenbock}
    \Delta_H=\nabla^*\nabla+\cR_m.
\end{equation}
If $e^I=e^{i_1}\wedge\cdots\wedge e^{i_m}$ is a simple unit form and $E_I=\operatorname{span}\{e_{i_1},\ldots,e_{i_m}\}$, then
\begin{equation}\label{eq:curvature-on-form}
    \langle\cR_me^I,e^I\rangle
    =\sum_{i\in I,\,j\notin I}\Rm(e_i,e_j,e_i,e_j).
\end{equation}
See \cite{GallotMeyer,Labbi,Petersen} for the curvature endomorphism on forms. Combining this with the definition of Ricci curvature gives
\begin{equation}\label{eq:curvature-algebra}
    \tr_{E_I}\Ric+\langle\cR_me^I,e^I\rangle
    =2\cC_m(E_I).
\end{equation}

Diagonalize $\ceJ_t$ and set $p_I=\prod_{i\in I}\lambda_i$. Multiplying \eqref{eq:curvature-algebra} by $p_I$ and summing over $I$ gives the fundamental inequality
\begin{align}\label{eq:curvature-lower}
    \tr_{\Lambda^m}\left(
    \cR_m\circ\bigwedge^m\ceJ_t
    \right)
    +\sum_{|I|=m}p_I\tr_{E_I}\Ric
    =2\sum_{|I|=m}p_I\cC_m(E_I)
    \ge2\kappa\sigma_m(\ceJ_t).
\end{align}
By \eqref{eq:Cauchy-Binet}, the first term in \eqref{eq:curvature-lower} is the averaged curvature action on the $m$-dimensional volume elements $\Omega_{\mathbf y}$:
\[
    \tr_{\Lambda^m}\left(
    \cR_m\circ\bigwedge^m\ceJ_t
    \right)
    =\frac1{m!}\int_{M^m}
    \langle\cR_m\Omega_{\mathbf y},\Omega_{\mathbf y}\rangle
    \,d\mathbf y.
\]

We use the following weighted form of the Weitzenb\"ock formula to control this term.

\begin{lemma}\label{lem:weighted-Weitzenbock}
    Let $w>0$ be smooth, let $\chi\in C_c^\infty(M)$, and let $\Omega$ be a smooth $m$-form. Then
    \begin{align}
        \int_Mw\chi^2\langle\cR_m\Omega,\Omega\rangle\,dg
        \le C_m\int_Mw\bigl(\chi^2|\nabla\Omega|^2
        +|d\chi|^2|\Omega|^2+\chi^2|d\log w|^2|\Omega|^2\bigr)\,dg.
        \label{eq:weighted-Weitzenbock}
    \end{align}
\end{lemma}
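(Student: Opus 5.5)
The plan is to integrate the Weitzenb\"ock identity \eqref{eq:Weitzenbock} against the weight $w\chi^2$. We then bound $\Delta_H$ by $d$ and $d^*$, each of which is controlled pointwise by $|\nabla\Omega|$. The key observation is that the weighted Hodge Laplacian pairs with $\Omega$ into nonnegative squares, so only integration by parts, the pointwise Leibniz rules, and Cauchy--Schwarz are needed.

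First, take the pointwise inner product of \eqref{eq:Weitzenbock} with $w\chi^2\Omega$ and integrate:
\[
    \int_M w\chi^2\langle\cR_m\Omega,\Omega\rangle\,dg
    =\int_M w\chi^2\langle\Delta_H\Omega,\Omega\rangle\,dg
    -\int_M w\chi^2\langle\nabla^*\nabla\Omega,\Omega\rangle\,dg .
\]
Since $\chi$ has compact support, all integrations by parts are legitimate. Write $\Delta_H=dd^*+d^*d$ and move one derivative onto $w\chi^2\Omega$ in each term. Using
\[
    d(w\chi^2\Omega)=w\chi^2 d\Omega+w\chi^2\,(d\log w+2\chi^{-1}d\chi)\wedge\Omega
\]
and the corresponding interior-product formula for $d^*$, we obtain
\[
    \int_M w\chi^2\bigl(|d\Omega|^2+|d^*\Omega|^2\bigr)\,dg
    +\int_M w\chi^2\,O\bigl((|d\log w|+2|d\chi|/\chi)\,|\Omega|\,(|d\Omega|+|d^*\Omega|)\bigr)\,dg .
\]
In the second integral, $\chi^{-1}|d\chi|\cdot\chi^2$ is just $\chi|d\chi|$, so no singularity arises where $\chi=0$.

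Similarly,
\[
    \int_M w\chi^2\langle\nabla^*\nabla\Omega,\Omega\rangle\,dg
    =\int_M w\chi^2|\nabla\Omega|^2\,dg+\text{(analogous cross terms)}.
\]
This term enters with a minus sign. It can be bounded crudely, since the leading part $-w\chi^2|\nabla\Omega|^2$ is nonpositive and may simply be dropped, or else absorbed into the final bound. The remaining cross terms are handled as below.

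Next, use the pointwise bounds $|d\Omega|^2+|d^*\Omega|^2\le C_m|\nabla\Omega|^2$, which hold because $d$ and $d^*$ are antisymmetrization and contraction of $\nabla\Omega$. Then apply Cauchy--Schwarz with weights to every cross term:
\[
    w\chi\,|d\chi|\,|\Omega|\,|\nabla\Omega|\le \tfrac12 w\bigl(\chi^2|\nabla\Omega|^2+|d\chi|^2|\Omega|^2\bigr),
\]
and likewise
\[
    w\chi^2|d\log w|\,|\Omega|\,|\nabla\Omega|\le \tfrac12 w\chi^2\bigl(|\nabla\Omega|^2+|d\log w|^2|\Omega|^2\bigr).
\]
Summing these gives \eqref{eq:weighted-Weitzenbock} with a constant $C_m$ depending only on $m$ and $n$.

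There is no serious obstacle. The main point requiring care is bookkeeping the terms produced when derivatives fall on the weight $w\chi^2$. One must check that every such term contains at most one factor of $|\nabla\Omega|$ and one factor of $|\Omega|$, so that Cauchy--Schwarz lands exactly in the three allowed terms. An upper bound suffices, which is why the sign of the $\nabla^*\nabla$ contribution never needs to be exploited.
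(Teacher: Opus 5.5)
Your argument is correct and is essentially the paper's proof: both integrate the Weitzenb\"ock identity by parts and use the pointwise bound $|d\eta|^2+|d^*\eta|^2\le C|\nabla\eta|^2$. The paper shortens the bookkeeping by applying the unweighted inequality $\int_M\langle\cR_m\eta,\eta\rangle\,dg\le C_m\int_M|\nabla\eta|^2\,dg$ to $\eta=\chi\sqrt w\,\Omega$, so the derivatives of the weight enter through a single product rule and no separate Cauchy--Schwarz estimate on cross terms is needed.
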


\begin{proof}
    For every compactly supported $m$-form $\eta$, integration of \eqref{eq:Weitzenbock} gives
    \[
        \int_M\langle\cR_m\eta,\eta\rangle\,dg
        =\int_M\bigl(|d\eta|^2+|\delta\eta|^2-|\nabla\eta|^2\bigr)\,dg
        \le C_m\int_M|\nabla\eta|^2\,dg.
    \]
    Apply this with $\eta=\chi\sqrt w\,\Omega$ and use the product rule.
\end{proof}

The second term in \eqref{eq:curvature-lower} is obtained by tracing the Ricci tensor over the $m$-plane $E_I$ spanned by the corresponding eigenvectors, with the weight $p_I=\prod_{i\in I}\lambda_i$. Equivalently, it is
\[
    \sum_{|I|=m}p_I\tr_{E_I}\Ric
    =\sum_{i=1}^n\Ric(e_i,e_i)\lambda_i
    \sigma_{m-1}(\lambda_1,\ldots,\widehat{\lambda_i},\ldots,\lambda_n).
\]
This term is controlled by the following estimate.

\begin{lemma}\label{lem:Ricci-contraction}
    We have
    \begin{equation}\label{eq:Ricci-contraction}
        \sum_{|I|=m}p_I\tr_{E_I}\Ric
        \le
        C\frac{L_k(t)^{m-1}}t Q_k(x,t).
    \end{equation}
\end{lemma}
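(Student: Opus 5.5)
The plan is to split the Ricci tensor into its nonnegative part $\Ric+kg$ and the constant $-kg$. The key point is that, in the weighted sum, the nonnegative part is weighted by coefficients bounded by $L_k(t)^{m-1}$. The remaining contraction against $\ceJ_t$ is then exactly the Ricci term appearing in $Q_k$.

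First, work in a $g$-orthonormal eigenbasis $e_1,\ldots,e_n$ of $\ceJ_t$, and use the identity recorded before the lemma:
\[
    \sum_{|I|=m}p_I\tr_{E_I}\Ric
    =\sum_{i=1}^n\Ric(e_i,e_i)\,\lambda_i\,\mu_i,
    \qquad
    \mu_i:=\sigma_{m-1}(\lambda_1,\ldots,\widehat{\lambda_i},\ldots,\lambda_n).
\]
Since $\lambda_j\ge0$, every weight $\lambda_i\mu_i$ is nonnegative. By \eqref{eq:Fisher-upper}, $\lambda_j\le L_k(t)$ for all $j$, which gives
\[
    0\le\mu_i\le\binom{n-1}{m-1}L_k(t)^{m-1}.
\]
Write $\Ric(e_i,e_i)=(\Ric+kg)(e_i,e_i)-k$. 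The contribution of $-k$ is $-k\sum_i\lambda_i\mu_i\le0$, so we drop it. In the remaining sum the factors $(\Ric+kg)(e_i,e_i)\lambda_i$ are nonnegative by the hypothesis $\Ric\ge-kg$. Bounding each $\mu_i$ by its maximum therefore gives
\[
    \sum_{|I|=m}p_I\tr_{E_I}\Ric
    \le\binom{n-1}{m-1}L_k(t)^{m-1}\sum_i\lambda_i(\Ric+kg)(e_i,e_i)
    =C\,L_k(t)^{m-1}\bigl(\langle\Ric,g^F_t\rangle+k\tr\ceJ_t\bigr).
\]

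Second, compare the last bracket with $Q_k$. From \eqref{eq:Qk-definition}, drop the nonnegative Hessian term and use the identity $4t^2\int_M\Ric(\nabla_xf,\nabla_xf)\,d\nu_{x,t}=2t\langle\Ric,g^F_t\rangle$ from the proof of Proposition~\ref{prop:corrected-source}. This gives
\[
    Q_k\ge2t\langle\Ric,g^F_t\rangle+2nktL_k(t).
\]
By \eqref{eq:Fisher-upper}, $k\tr\ceJ_t\le nkL_k(t)$, and hence
\[
    2t\bigl(\langle\Ric,g^F_t\rangle+k\tr\ceJ_t\bigr)\le Q_k(x,t).
\]
Substituting this into the previous display yields \eqref{eq:Ricci-contraction} with $C=\tfrac12\binom{n-1}{m-1}$.

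I do not expect a genuine obstacle here. The only step needing care is where the sign of the weights is used. The bound $\mu_i\le CL_k^{m-1}$ may only be applied to the nonnegative quantity $(\Ric+kg)(e_i,e_i)\lambda_i$, not to $\Ric(e_i,e_i)\lambda_i$ directly. This is exactly why the $-kg$ part must be separated out and discarded first.
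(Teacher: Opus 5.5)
Your proof is correct and is essentially the paper's argument. Both proofs expand in an eigenbasis of $\ceJ_t$, use that the weights $\lambda_i\mu_i$ are nonnegative, bound $\mu_i\le\binom{n-1}{m-1}L_k(t)^{m-1}$, and compare $\langle\Ric,g^F_t\rangle+nkL_k(t)$ with the Ricci term of $Q_k$. The only cosmetic difference is that the paper discards the terms with $\Ric(e_i,e_i)\le0$ and then restores them using $\Ric(e_i,e_i)\ge-k$, whereas you shift by $kg$ first; the two routes give the same bound.
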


\begin{proof}
    Write $r_i=\Ric(e_i,e_i)$. Then
    \[
        \sum_{|I|=m}p_I\tr_{E_I}\Ric
        =\sum_{i=1}^nr_i\lambda_i
        \sigma_{m-1}(\lambda_1,\ldots,\widehat{\lambda_i},\ldots,\lambda_n).
    \]
    By \eqref{eq:Fisher-upper},
    \[
        \sigma_{m-1}(\lambda_1,\ldots,\widehat{\lambda_i},\ldots,\lambda_n)
        \le\binom{n-1}{m-1}L_k(t)^{m-1}.
    \]
    Consequently,
    \[
        \sum_{|I|=m}p_I\tr_{E_I}\Ric
        \le C L_k(t)^{m-1}\sum_{r_i>0}r_i\lambda_i.
    \]
    Using $r_i\ge-k$, we obtain
    \begin{align*}
        \sum_{r_i>0}r_i\lambda_i
        \le\sum_{i=1}^nr_i\lambda_i+nkL_k(t)=\langle\Ric,g^F_t\rangle+nkL_k(t)
        \le\frac{Q_k(x,t)}{2t},
    \end{align*}
    where the last inequality follows from \eqref{eq:Qk-definition}. This proves the lemma.
\end{proof}

\begin{proof}[Proof of Theorem~\ref{thm:Fisher-decay}]
    We begin with a fixed-time estimate. For every $z\in M$ and $s,t>0$,
    \begin{align}
        P_s\bigl(\lambda_{n-m+1}(\cdot,t)^m\bigr)(z)
        \le\frac C\kappa\bigg[
        \frac{L_k(t)^{m-1}}t
        P_sQ_k(\cdot,t)(z)
        +\frac{L_k(t)^{m+1}}t
        +L_k(t)^m\left(\frac1s+k\right)
        \bigg].
        \label{eq:weighted-eigenvalue}
    \end{align}
    
    Let $w(x)=K(z,x,s)$. By the Greene--Wu exhaustion theorem \cite{GreeneWu}, there is a smooth proper function $\rho\ge0$ satisfying $|d\rho|\le2$. Choose $\chi\in C_c^\infty([0,\infty))$ with $0\le\chi\le1$ and $\chi=1$ on $[0,1]$, and set
    \[
        \chi_R(x)=\chi(\rho(x)/R).
    \]
    Then $|d\chi_R|\le C/R$ and $\chi_R\to1$ pointwise.
    
    For fixed $\mathbf y$, apply Lemma~\ref{lem:weighted-Weitzenbock} with $\Omega=\Omega_{\mathbf y}$ and $\chi=\chi_R$. The curvature is bounded on the compact support of $\chi_R$, so Fubini's theorem applies for fixed $R$. Integrate \eqref{eq:weighted-Weitzenbock} in $\mathbf y$, divide by $m!$, and then use \eqref{eq:wedge-gradient}, \eqref{eq:curvature-lower}, and \eqref{eq:Ricci-contraction}. This gives
    \begin{align*}
        2\kappa\int_Mw\chi_R^2\sigma_m(\ceJ_t)\,dg
        \le C\bigg[&
        \frac{L_k(t)^{m-1}}t
        \int_Mw\chi_R^2Q_k(\cdot,t)\,dg
        +\frac{L_k(t)^{m+1}}t\\
        &+\int_Mw|d\chi_R|^2\sigma_m(\ceJ_t)\,dg
        +\int_Mw\chi_R^2|d\log w|^2
        \sigma_m(\ceJ_t)\,dg
        \bigg].
    \end{align*}
    
    Since $P_s1=1$,
    \[
        \int_Mw|d\chi_R|^2\sigma_m(\ceJ_t)\,dg
        \le\frac C{R^2}L_k(t)^m\longrightarrow0.
    \]
    Furthermore, \eqref{eq:target-Fisher} gives
    \[
        \int_Mw|d\log w|^2\,dg
        \le C_n\left(\frac1s+k\right),
    \]
    while $\sigma_m(\ceJ_t)\le\binom nmL_k(t)^m$. If $P_sQ_k(\cdot,t)(z)=\infty$, the conclusion is automatic. Otherwise, dominated convergence applies to the remaining terms. Letting $R\to\infty$ and using \eqref{eq:sigma-controls-lambda} proves \eqref{eq:weighted-eigenvalue}.
    
    We now average \eqref{eq:weighted-eigenvalue} in time. Fix $x\in M$ and $t>0$. For $r\in[t/2,t]$, \eqref{eq:data-processing} gives
    \begin{equation}\label{eq:eigenvalue-data-processing}
        \lambda_{n-m+1}(\cdot,t)
        \le\frac t r\lambda_{n-m+1}(\cdot,r)
        \le2\lambda_{n-m+1}(\cdot,r).
    \end{equation}
    Set $s=2t-r\in[t,3t/2]$. By \eqref{eq:same-point-Harnack},
    \begin{equation}\label{eq:compare-heat-times}
        P_t\bigl(\lambda_{n-m+1}(\cdot,t)\bigr)(x)
        \le C_ne^{C_nkt}
        P_{2t-r}\bigl(\lambda_{n-m+1}(\cdot,r)\bigr)(x).
    \end{equation}
    Jensen's inequality and \eqref{eq:weighted-eigenvalue} imply
    \begin{align*}
        P_t\bigl(\lambda_{n-m+1}(\cdot,t)\bigr)(x)
        \le Ce^{C_nkt}\kappa^{-1/m}
        \bigg[&
        \frac{L_k(r)^{m-1}}r
        P_{2t-r}Q_k(\cdot,r)(x)
        +\frac{L_k(r)^{m+1}}r\\
        &+L_k(r)^m\left(\frac1{2t-r}+k\right)
        \bigg]^{1/m}.
    \end{align*}
    Since $r\in[t/2,t]$, the preceding inequality gives
    \begin{equation}\label{eq:pointwise-r}
        P_t\bigl(\lambda_{n-m+1}(\cdot,t)\bigr)(x)
        \le Ce^{C_nkt}\kappa^{-1/m}
        \left[
        \frac{L_k(2t)^{m-1}}t
        P_{2t-r}Q_k(\cdot,r)(x)
        +\frac{L_k(2t)^{m+1}}t
        \right]^{1/m}.
    \end{equation}
    Here we used $t^{-1}+k\le Ct^{-1}L_k(2t)$.
    
    By \eqref{eq:Qk-potential},
    \begin{equation}\label{eq:Qk-time-average}
        \int_{t/2}^t P_{2t-r}Q_k(\cdot,r)(x)\,dr
        \le\int_0^{2t}P_{2t-r}Q_k(\cdot,r)(x)\,dr
        \le C_nt L_k(2t)^2.
    \end{equation}
    Average \eqref{eq:pointwise-r} over $[t/2,t]$ and use the concavity of $u\mapsto u^{1/m}$. It follows that
    \[
        P_t\bigl(\lambda_{n-m+1}(\cdot,t)\bigr)(x)
        \le
        Ce^{C_nkt}L_k(2t)^{1+1/m}
        (\kappa t)^{-1/m}.
    \]
    The first term in the minimum in \eqref{eq:Fisher-main} follows from \eqref{eq:Fisher-upper}. This proves \eqref{eq:Fisher-main}. Equation \eqref{eq:Fisher-local} follows because $e^{C_nkt}$ and $L_k(2t)$ are uniformly bounded when $kt\le1$.
\end{proof}

\section{Entropy and volume}\label{sec:entropy-volume}

We now use Theorem~\ref{thm:Fisher-decay}, the evolution equation \eqref{eq:E-Fisher-trace}, and Lemma~\ref{lem:heat-potential} to prove an upper bound for the Nash entropy, from which the volume estimate follows.

\begin{proposition}\label{prop:Nash-entropy}
    Under the hypotheses of Theorem~\ref{thm:volume}, there is a constant $C=C(n,m)$ such that, for every $x\in M$ and $t>0$,
    \begin{equation}\label{eq:Nash-main}
        \ceN_x(t)
        \le
        -\frac{n-m+1}{2}
        \log\left(1+\kappa\min\{t,k^{-1}\}\right)
        +C(1+kt),
    \end{equation}
    where $k^{-1}=+\infty$ when $k=0$.
\end{proposition}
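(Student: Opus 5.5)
We will bound $\ceN_x(t)$ by integrating the identity \eqref{eq:E-Fisher-trace} along the heat flow. Since $\square\ceN_x=E=(\tr\ceJ_t-n)/(2t)$, we have
\[
    -\square\ceN_x(t)=\frac1{2t}\sum_{i=1}^n(1-\lambda_i).
\]
The index ranges $i\le n-m+1$ and $i>n-m+1$ are handled differently. For $i\ge n-m+1$ there are $m-1$ further eigenvalues, and each satisfies $\lambda_i\le L_k(t)$ by \eqref{eq:Fisher-upper}, so each contributes at least $(1-L_k(t))/(2t)\ge-Ck$. For $i\le n-m+1$ we have $\lambda_i\le\lambda_{n-m+1}$, so these $n-m+1$ terms contribute at least $\frac{n-m+1}{2t}(1-\lambda_{n-m+1})$. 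Hence
\[
    \square(-\ceN_x)\ \ge\ \frac{n-m+1}{2t}\bigl(1-\lambda_{n-m+1}(\cdot,t)\bigr)-C_nk.
\]

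Apply Lemma~\ref{lem:heat-potential} to $u=-\ceN_x(t)+C_nkt$. The lower bound on $u$ follows from Lemma~\ref{lem:Nash-upper} on finite time intervals, and $u\to0$ as $t\downarrow0$. Lemma~\ref{lem:heat-potential} requires $F\ge0$, whereas our source $\frac{n-m+1}{2r}(1-\lambda_{n-m+1})$ may be negative. To fix this, write the source as $F_+-F_-$ with $F_-=\frac{n-m+1}{2r}(\lambda_{n-m+1}-1)_+\le C k$, which holds by \eqref{eq:Fisher-upper} and $L_k(r)-1\le Ckr$, and absorb $F_-$ into the $C_nkt$ correction. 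Alternatively, use the linear version of the Duhamel comparison directly, since all terms are bounded on compact time intervals. Either way, for $T\le k^{-1}$ we obtain
\[
    -\ceN_x(T)\ \ge\ \int_0^T\frac{n-m+1}{2r}\Bigl(1-P_{T-r}\bigl(\lambda_{n-m+1}(\cdot,r)\bigr)(x)\Bigr)\,dr-CkT.
\]

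The main obstacle is that Theorem~\ref{thm:Fisher-decay} controls $P_r(\lambda_{n-m+1}(\cdot,r))$, with the semigroup time equal to the Fisher time, whereas the integrand above contains $P_{T-r}$. I would first split the integral into $r\in[T/2,T]$, where $T-r\le r$, and $r\le T/2$. On $r\le T/2$, I would use the data-processing inequality \eqref{eq:data-processing}, which gives $\lambda(\cdot,r)\le(r/r')\lambda(\cdot,r')$ for $r'<r$, to move the Fisher time down to $r'$ comparable to $T-r$. I would then compare semigroup times with the same-point Harnack inequality \eqref{eq:same-point-Harnack}. This only works when the two times are comparable, so for $r$ much smaller than $T$ a cleaner route is needed: rerun the proof of Theorem~\ref{thm:Fisher-decay} with $s=T-r$ arbitrary. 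The fixed-time estimate \eqref{eq:weighted-eigenvalue} already allows general $s$, and \eqref{eq:Qk-potential} controls exactly $\int_0^TP_{T-r}Q_k(\cdot,r)\,dr$. By Jensen, this gives, for dyadic blocks $r\in[2^{-j-1}T,2^{-j}T]$,
\[
    \int P_{T-r}\lambda_{n-m+1}(\cdot,r)\,\frac{dr}{r}\ \lesssim\ \min\{1,(\kappa 2^{-j}T)^{-1/m}\}+\text{(error from the }1/s\text{ term)}.
\]
The error from $1/s$ is harmless except when $s=T-r$ is small. That happens only in the top block, where the Harnack argument above applies instead.

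Finally, for each dyadic block on which $\kappa r\ge1$, the integrand contribution is $\frac{n-m+1}{2}\log2-O((\kappa r)^{-1/m})$. Summing the geometric tail of $(\kappa r)^{-1/m}$ gives a bounded total, so the integral is at least $\frac{n-m+1}{2}\log(1+\kappa T)-C$. This yields \eqref{eq:Nash-main} for $T\le k^{-1}$. For $t>k^{-1}$, I would use the monotonicity
\[
    \frac{d}{dt}\ceN_x\le\frac{n(L_k(t)-1)}{2t}\le Ck,
\]
which comes from \eqref{eq:target-Fisher} in the proof of Lemma~\ref{lem:Nash-upper}. Integrating this from $k^{-1}$ to $t$ adds at most $Ckt$, which is the $C(1+kt)$ term.
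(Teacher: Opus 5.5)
There is a genuine gap at the step you call the main obstacle: the fix you propose does not work as written. For $r\le T/2$ one has $T-r\ge r$, and \eqref{eq:data-processing} can only lower the Fisher time, so it cannot make the Fisher time comparable to $T-r$. The rerun with $s=T-r$ fails quantitatively. On a block $r\sim\tau=2^{-j}T$, \eqref{eq:weighted-eigenvalue} and Jensen give a block average of order $\kappa^{-1/m}\bigl(\tau^{-2}\int_{\text{block}}P_{T-r}Q_k(\cdot,r)\,dr+\tau^{-1}\bigr)^{1/m}$. If you bound the block integral by the full integral in \eqref{eq:Qk-potential}, which is $\le CT$, you get $(\kappa\tau)^{-1/m}(T/\tau)^{1/m}$ rather than $(\kappa\tau)^{-1/m}$. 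Summed over $A/\kappa\le\tau\le T/2$, this is of order $(\kappa T)^{1/m}$, not $O(1)$. The loss is not just a lossy estimate. The global bound constrains only the total $Q_k$-mass, and spoiling the block at scale $\tau$ costs only about $\kappa\tau^2$ of it. So every block with $\tau\lesssim\sqrt{T/\kappa}$ could be spoiled within the budget $CT$, and from \eqref{eq:Qk-potential} over $[0,T]$ alone this route yields at best the coefficient $\frac{n-m+1}{4}$. The Harnack step near $r=T$ also fails, since $T-r\to0$ there. That range is harmless anyway, because $1-\lambda_{n-m+1}\ge1-L_k(r)\ge-Ckr$, so it contributes only $O(1)$ and can simply be dropped.

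The missing idea is the semigroup factorization. For $r\le T/2$ write $P_{T-r}=P_{T-2r}\circ P_r$. Theorem~\ref{thm:Fisher-decay} holds at every base point, and $P_{T-2r}$ is positive with $P_{T-2r}1=1$. Hence
\[
P_{T-r}\bigl(\lambda_{n-m+1}(\cdot,r)\bigr)(x)=P_{T-2r}\Bigl(P_r\bigl(\lambda_{n-m+1}(\cdot,r)\bigr)\Bigr)(x)\le C\min\{1,(\kappa r)^{-1/m}\},
\]
which is the paper's \eqref{eq:entropy-Fisher-comparison}. No data processing, Harnack, or rerun of the proof of Theorem~\ref{thm:Fisher-decay} is needed. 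The same factorization would also repair your dyadic argument, via $\int_a^bP_{T-r}Q_k(\cdot,r)\,dr=P_{T-b}\bigl[\int_a^bP_{b-r}Q_k(\cdot,r)\,dr\bigr]\le P_{T-b}\bigl[2\cE_k(\cdot,b)\bigr]\le Cb$, but then the rerun is redundant.

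The rest of your outline is sound. Absorbing $F_-\le Ck$ into a linear correction is equivalent to the paper's exact correction $B_k(t)=\frac n2\int_0^t\frac{L_k(r)-1}{r}\,dr$, which makes the source nonnegative. Extending from $t\le k^{-1}$ to $t>k^{-1}$ via $\frac{d}{dt}\ceN_x\le\frac{n(L_k(t)-1)}{2t}\le Ck$ from \eqref{eq:target-Fisher} is a valid alternative to the paper's restriction of the integration range to $A\kappa^{-1}\le r\le\min\{T/2,k^{-1}\}$.
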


\begin{proof}
By \eqref{eq:E-Fisher-trace} and the ordering of the Fisher eigenvalues,
\begin{align}
    -E(x,t)
    =\frac1{2t}\sum_{i=1}^n(1-\lambda_i(x,t))
    \ge\frac1{2t}
    \left[
    (n-m+1)(1-\lambda_{n-m+1}(x,t))
    -(m-1)(L_k(t)-1)
    \right].
    \label{eq:E-lambda}
\end{align}
Define
\begin{equation}\label{eq:B-k}
    B_k(t)
    :=\frac n2\int_0^t\frac{L_k(r)-1}{r}\,dr.
\end{equation}
Since $\square(-\ceN)=-E$, \eqref{eq:E-lambda} gives
\begin{equation}\label{eq:entropy-positive-source}
    \square\bigl(-\ceN+B_k\bigr)
    \ge\frac{n-m+1}{2t}
    \bigl(L_k(t)-\lambda_{n-m+1}(\cdot,t)\bigr)
    \ge0.
\end{equation}
By \eqref{eq:Nash-upper-basic}, $-\ceN+B_k\ge0$. Applying Lemma~\ref{lem:heat-potential} to \eqref{eq:entropy-positive-source} yields
\begin{equation}\label{eq:entropy-potential}
    -\ceN_x(T)+B_k(T)
    \ge\frac{n-m+1}{2}\int_0^T
    \frac{L_k(r)-P_{T-r}\lambda_{n-m+1}(\cdot,r)(x)}r\,dr.
\end{equation}

Suppose $r\le T/2$ and $kr\le1$. By the semigroup property and \eqref{eq:Fisher-local},
\begin{equation}\label{eq:entropy-Fisher-comparison}
    P_{T-r}\lambda_{n-m+1}(\cdot,r)
    =P_{T-2r}\left(P_r\lambda_{n-m+1}(\cdot,r)\right)
    \le C\min\{1,(\kappa r)^{-1/m}\}.
\end{equation}
Since $\lambda_{n-m+1}(\cdot,r)\le L_k(r)$, the integrand in \eqref{eq:entropy-potential} is nonnegative. Let $C_0$ be the constant in \eqref{eq:entropy-Fisher-comparison}, set $A=(2C_0)^m$, and integrate over
\[
    A\kappa^{-1}\le r\le\min\{T/2,k^{-1}\}.
\]
When this interval is nonempty, \eqref{eq:entropy-Fisher-comparison} and $L_k\ge1$ give
\begin{align*}
    \int_{A/\kappa}^{\min\{T/2,k^{-1}\}}
    \frac{L_k(r)-P_{T-r}\lambda_{n-m+1}(\cdot,r)(x)}r\,dr
    &\ge \int_{A/\kappa}^{\min\{T/2,k^{-1}\}}
    \left(1-C_0(\kappa r)^{-1/m}\right)\frac{dr}{r}\\
    &\ge \log\left(\kappa\min\{T/2,k^{-1}\}\right)-C.
\end{align*}
It follows that
\begin{equation}\label{eq:entropy-log}
    \int_0^T
    \frac{L_k(r)-P_{T-r}\lambda_{n-m+1}(\cdot,r)(x)}r\,dr
    \ge
    \log\left(1+\kappa\min\{T,k^{-1}\}\right)-C.
\end{equation}
If the indicated interval is empty, the logarithm in \eqref{eq:entropy-log} is bounded by a constant, so the same conclusion holds after increasing $C$.

Finally, if $k>0$, the change of variables $u=kr$ in the definition
\eqref{eq:B-k} gives
\[
    B_k(T)
    =\frac n2\int_0^{kT}
    \frac{2u/(1-e^{-2u})-1}{u}\,du.
\]
The function
\[
    u\longmapsto\frac{2u/(1-e^{-2u})-1}{u}
\]
is bounded on $(0,\infty)$. When $k=0$, one has $B_k\equiv0$. Consequently,
\begin{equation}\label{eq:B-k-bound}
    B_k(T)\le C_nkT.
\end{equation}
Combining \eqref{eq:entropy-potential}, \eqref{eq:entropy-log}, and \eqref{eq:B-k-bound} proves \eqref{eq:Nash-main}.
\end{proof}

\begin{proof}[Proof of Theorem~\ref{thm:volume}]
    The Li--Yau Gaussian upper estimate under $\Ric\ge-kg$ \cite[Corollary~3.1]{LiYau} gives
    \begin{equation}\label{eq:Gaussian-upper}
        K(x,y,t)
        \le
        \frac{C_n}{\vol B(x,\sqrt t)}
        \exp\left(
        C_nkt-\frac{d(x,y)^2}{C_nt}
        \right).
    \end{equation}
    Taking $-\log$, integrating with respect to $d\nu_{x,t}$, and discarding the nonnegative distance term yields
    \begin{equation}\label{eq:entropy-volume}
        \log\frac{\vol B(x,\sqrt t)}{t^{n/2}}
        \le\ceN_x(t)+C_n(1+kt).
    \end{equation}
    If $kt\le1$, then \eqref{eq:Nash-main} and \eqref{eq:entropy-volume} imply
    \[
        \vol B(x,\sqrt t)
        \le Ct^{n/2}(1+\kappa t)^{-(n-m+1)/2}.
    \]
    Taking $t=R^2$ proves \eqref{eq:volume-local}.
    
    It remains to prove the global estimate. Suppose first that $k>0$ and set $\rho=k^{-1/2}$. By \eqref{eq:volume-local},
    \begin{equation}\label{eq:volume-at-curvature-scale}
        \vol B(x,\rho)
        \le C\kappa^{-(n-m+1)/2}\rho^{m-1}.
    \end{equation}
    For $R\ge\rho$, Bishop--Gromov comparison with the simply connected space form of sectional curvature $-k/(n-1)$ gives
    \begin{equation}\label{eq:Bishop-large-scale}
        \frac{\vol B(x,R)}{\vol B(x,\rho)}
        \le C_ne^{C_n\sqrt k R}.
    \end{equation}
    Since $\rho^{m-1}\le R^{m-1}$, equations \eqref{eq:volume-at-curvature-scale} and \eqref{eq:Bishop-large-scale} prove \eqref{eq:volume-global} for $R\ge\rho$. If $R\le\rho$, then \eqref{eq:volume-local} gives
    \[
        \vol B(x,R)
        \le C\kappa^{-(n-m+1)/2}R^{m-1},
    \]
    which is stronger. When $k=0$, the same conclusion follows directly from \eqref{eq:volume-local}. This completes the proof of Theorem~\ref{thm:volume}.
\end{proof}

\bibliographystyle{amsalpha}
\renewcommand{\bibliofont}{\fontsize{8}{9}\selectfont}
\bibliography{references}

\end{document}